\newtheorem{theorem}{Theorem}[section]
\theoremstyle{plain}
\newtheorem{lemma}[theorem]{Lemma}
\newtheorem{proposition}[theorem]{Proposition}
\theoremstyle{definition}
\newtheorem{definition}[theorem]{Definition}
\newtheorem{example}[theorem]{Example}
\theoremstyle{remark}
\newtheorem{remark}[theorem]{Remark}
\definecolor{A}{rgb}{.75,1,.75}
\numberwithin{equation}{section}
\newcommand{\al}{\alpha}
\newcommand{\be}{\beta}
\newcommand{\C}{\mathbb C}
\newcommand{\Z}{\mathbb Z}
\newcommand{\Cl}{\mathcal C}
\newcommand{\h}{\mathfrak{h}}
\newcommand{\frakH}{\mathfrak{H}}
\newcommand{\I}{\mathfrak{I}}
\newcommand{\wtd}{\widetilde}
\newcommand{\td}{\tilde}
\newcommand{\aH}{\mathfrak{H}}
\newcommand{\aHC}{\mathfrak{H}^{\mathfrak c}}           
\newcommand{\caH}{\mathfrak{H}^{\sim}}             
\newcommand{\saH}{\mathfrak{H}^-}          
{\vskip-\lastskip\medskip
  \noindent
  {\em #1.}\enspace
  }%
{\qed\par\medskip
  }
\begin{document}

\title[The classical spin affine Hecke algebras]
{Hecke-Clifford algebras and spin Hecke algebras I: the classical
affine type}

\author[Ta Khongsap]{Ta Khongsap}

\author[Weiqiang Wang]{Weiqiang Wang}
\address{Department of Math., University of Virginia,
Charlottesville, VA 22904} \email{tk7p@virginia.edu (Khongsap);
ww9c@virginia.edu (Wang)}

\begin{abstract}
Associated to the classical Weyl groups, we introduce the notion
of degenerate spin affine Hecke algebras and affine Hecke-Clifford
algebras. For these algebras, we establish the PBW properties,
formulate the intertwiners, and describe the centers. We further
develop connections of these algebras with the usual degenerate
(i.e. graded) affine Hecke algebras of Lusztig by introducing a
notion of degenerate covering affine Hecke algebras.
\end{abstract}

\maketitle

\section{Introduction}

\subsection{}
The Hecke algebras associated to finite and affine Weyl groups are
ubiquitous in diverse areas, including representation theories
over finite fields, infinite fields of prime characteristic,
$p$-adic fields, and Kazhdan-Lusztig theory for category $\mathcal
O$. Lusztig \cite{Lu1, Lu2} introduced the graded Hecke algebras,
also known as the degenerate affine Hecke algebras, associated to
a finite Weyl group $W$, and provided a geometric realization in
terms of equivariant homology. The degenerate affine Hecke algebra
of type $A$ has also been defined earlier by Drinfeld \cite{Dr} in
connections with Yangians, and it has recently played an important
role in modular representations of the symmetric group (cf.
Kleshchev \cite{Kle}).

In \cite{W1}, the second author introduced the degenerate spin
affine Hecke algebra of type $A$, and related it to the degenerate
affine Hecke-Clifford algebra introduced by Nazarov in his study of
the representations of the spin symmetric group \cite{Naz}. A
quantum version of the spin affine Hecke algebra of type $A$ has
been subsequently constructed in \cite{W2}, and was shown to be
related to the $q$-analogue of the affine Hecke-Clifford algebra (of
type $A$) defined by Jones and Nazarov \cite{JN}.

\subsection{}
The goal of this paper is to provide canonical constructions of the
degenerate affine Hecke-Clifford algebras and degenerate spin affine
Hecke algebras for all {\em classical} finite Weyl groups, which
goes beyond the type $A$ case, and then establish some basic
properties of these algebras. The notion of spin Hecke algebras is
arguably more fundamental while the notion of the Hecke-Clifford
algebras is crucial for finding the right formulation of the spin
Hecke algebras. We also construct the degenerate covering affine
Hecke algebras which connect to both the degenerate spin affine
Hecke algebras and the degenerate affine Hecke algebras of Lusztig.

\subsection{}

Let us describe our constructions in some detail. The Schur
multiplier for each finite Weyl group $W$ has been computed by Ihara
and Yokonuma \cite{IY} (see \cite{Kar}). We start with a
distinguished double cover $\wtd{W}$ for any finite Weyl group $W$:
\begin{eqnarray}  \label{ext}
1 \longrightarrow \Z_2 \longrightarrow \wtd{W} \longrightarrow W
\longrightarrow 1.
\end{eqnarray}
Denote $\Z_2 =\{1, z\}$. Assume that $W$ is generated by
$s_1,\ldots, s_n$ subject to the relations
$(s_{i}s_{j})^{m_{ij}}=1$. The quotient $\C W^- := \C \wtd{W}/
\langle z+1 \rangle$ is then generated by $t_1,\ldots, t_n$
subject to the relations
$(t_{i}t_{j})^{m_{ij}}=1$ for $m_{ij}$ odd, and
$(t_{i}t_{j})^{m_{ij}}= -1$ for $m_{ij}$ even. In the symmetric
group case, this double cover goes back to I.~Schur \cite{Sch}. Note
that $W$ acts as automorphisms on the Clifford algebra $\Cl_W$
associated to the reflection representation $\h$ of $W$. We
establish a (super)algebra isomorphism
$$
\Phi^{fin}: \Cl_W \rtimes \C W  \stackrel{\simeq}{\longrightarrow}
\Cl_W \otimes \C W^-,
$$
extending an isomorphism in the symmetric group case (due to
Sergeev \cite{Ser} and Yamaguchi \cite{Yam} independently) to all
Weyl groups. That is, the superalgebras $\Cl_W \rtimes \C W$ and
$\C W^-$ are Morita super-equivalent in the terminology of
\cite{W2}. The double cover $\wtd{W}$ also appeared in Morris
\cite{Mo}.

We formulate the notion of degenerate affine Hecke-Clifford
algebras $\aHC_W$ and spin affine Hecke algebras $\saH_W$, with
unequal parameters in type $B$ case, associated to Weyl groups $W$
of type $D$ and $B$. The algebra $\aHC_W$ (and respectively
$\saH_W$) contain $\Cl_W \rtimes \C W$ (and respectively $\C W^-$)
as subalgebras. We establish the PBW basis properties for these
algebras:
$$
\aHC_W \cong \C [\h^*] \otimes \Cl_W \otimes \C W,
 \qquad \saH_W
\cong \Cl [\h^*] \otimes \C W^-
$$
where $\C [\h^*]$ denotes the polynomial algebra and  $\Cl [\h^*]$
denotes a noncommutative skew-polynomial algebra. We describe
explicitly the centers for both $\aHC_W$ and $\saH_W$. The two
Hecke algebras $\aHC_W$ and $\saH_W$ are related by a Morita
super-equivalence, i,e. a (super)algebra isomorphism
$$
\Phi: \aHC_W \stackrel{\simeq}{\longrightarrow} \Cl_W \otimes
\saH_W
$$ which extends the
isomorphism $\Phi^{fin}$. Such an isomorphism holds also for $W$
of type $A$ \cite{W1}.

We generalize the construction in \cite{Naz} of the intertwiners
in the affine Hecke-Clifford algebras $\aHC_W$ of type $A$ to all
classical Weyl groups $W$. We also generalize the construction of
the intertwiners in \cite{W1} for $\saH_W$ of type $A$ to all
classical Weyl groups $W$. We further establish the basic
properties of these intertwiners in both $\aHC_W$ and $\saH_W$.
These intertwiners are expected to play a fundamental role in the
future development of the representation theory of these algebras,
as it is indicated by the work of Lusztig, Cherednik and others in
the setup of the usual affine Hecke algebras.

We further introduce a notion of degenerate covering affine Hecke
algebras $\caH_W$ associated to the double cover $\wtd{W}$ of the
Weyl group $W$ of classical type. The algebra $\caH_W$ contains a
central element $z$ of order $2$ such that the quotient of
$\caH_W$ by the ideal $\langle z+1\rangle$ is identified with
$\saH_W$ and its quotient by the ideal $\langle z-1\rangle$ is
identified with Lusztig's degenerate affine Hecke algebras
associated to $W$. In this sense, our covering affine Hecke
algebra is a natural affine generalization of the central
extension (\ref{ext}). A quantum version of the covering affine
Hecke algebra of type $A$ was constructed in \cite{W2}.

The results in this paper remain valid over any algebraically
closed field of characteristic $p \neq 2$ (and in addition $p \neq
3$ for type $G_2$). In fact, most of the constructions can be made
valid over the ring $\Z[\frac12]$ (occasionally we need to adjoint
$\sqrt{2}$).

\subsection{}
This paper and \cite{W1} raise many questions, including a
geometric realization of the algebras $\aHC_W$ or $\saH_W$ in the
sense of Lusztig \cite{Lu1, Lu2}, the classification of the simple
modules (cf. \cite{Lu3}), the development of the representation
theory, an extension to the exceptional Weyl groups, and so on. We
remark that the modular representations of $\aHC_W$ in the type
$A$ case including the modular representations of the spin
symmetric group have been developed by Brundan and Kleshchev
\cite{BK} (also cf. \cite{Kle}).

In a sequel \cite{KW} to this paper, we will extend the
constructions in this paper to the setup of rational double affine
Hecke algebras (see Etingof-Ginzburg \cite{EG}), generalizing and
improving a main construction initiated in \cite{W1} for the spin
symmetric group. We also hope to quantize these degenerate spin
Hecke algebras, reversing the history of developments from quantum
to degeneration for the usual Hecke algebras.

\subsection{}
The paper is organized as follows. In Section~\ref{sec:finite}, we
describe the distinguished covering groups of the Weyl groups, and
establish the isomorphism theorem in the finite-dimensional case.
We introduce in Section~\ref{sec:clifford} the degenerate affine
Hecke-Clifford algebras of type $D$ and $B$, and in
Section~\ref{sec:spin} the corresponding degenerate spin affine
Hecke algebras. We then extend the isomorphism $\Phi^{fin}$ to an
isomorphism relating these affine Hecke algebras, establish the
PBW properties, and describe the centers of $\aHC_W$ and $\saH_W$.
In Section~\ref{sec:cover}, we formulate the notion of degenerate
covering affine Hecke algebras, and establish the connections to
the degenerate spin affine Hecke algebras and usual affine Hecke
algebras.

{\bf Acknowledgements.} W.W. is partially supported by an NSF
grant.

\section{Spin Weyl groups and Clifford algebras}
\label{sec:finite}

\subsection{The Weyl groups} Let $W$ be an (irreducible)
finite Weyl group with the following presentation:
\begin{eqnarray} \label{eq:weyl}
\langle s_1,\ldots,s_n | (s_is_j)^{m_{ij}} = 1,\ m_{i i} = 1,
 \ m_{i j} = m_{j i} \in \Z_{\geq 2}, \text{for }  i
 \neq j \rangle
\end{eqnarray}
For a Weyl group $W$, the integers $m_{i j}$ take values in
$\{1,2,3,4,6\}$, and they are specified by the following
Coxeter-Dynkin diagrams whose vertices correspond to the generators
of $W$. By convention, we only mark the edge connecting $i,j$ with
$m_{ij} \ge 4$. We have $m_{ij}=3$ for $i \neq j$ connected by an
unmarked edge, and $m_{ij}=2$ if $i,j$ are not connected by an edge.

 \begin{equation*}
 \begin{picture}(150,45) 
 \put(-99,18){$A_{n}$}
 \put(-30,20){$\circ$}
 \put(-23,23){\line(1,0){32}}
 \put(10,20){$\circ$}
 \put(17,23){\line(1,0){23}}
 \put(41,22){ \dots }
 \put(64,23){\line(1,0){18}}
 \put(82,20){$\circ$}
 \put(89,23){\line(1,0){32}}
 \put(122,20){$\circ$}
 \put(-30,9){$1$}
 \put(10,9){$2$}
 \put(74,9){${n-1}$}
 \put(122,9){${n}$}
 \end{picture}
 \end{equation*}
 %
 \begin{equation*}
 \begin{picture}(150,55) 
 \put(-99,18){$B_{n}(n\ge 2)$}
 \put(-30,20){$\circ$}
 \put(-23,23){\line(1,0){32}}
 \put(10,20){$\circ$}
 \put(17,23){\line(1,0){23}}
 \put(41,22){ \dots }
 \put(64,23){\line(1,0){18}}
 \put(82,20){$\circ$}
 \put(89,23){\line(1,0){32}}
 \put(122,20){$\circ$}
 \put(-30,10){$1$}
 \put(10,10){$2$}
 \put(74,10){${n-1}$}
 \put(122,10){${n}$}
 %
 \put(102,24){$4$}
 \end{picture}
 \end{equation*}
%
%
 \begin{equation*}
 \begin{picture}(150,75) 
 \put(-99,28){$D_{n} (n \ge 4)$}
 \put(-30,30){$\circ$}
 \put(-23,33){\line(1,0){32}}
 \put(10,30){$\circ$}
 \put(17,33){\line(1,0){15}}
 \put(35,30){$\cdots$}
 \put(52,33){\line(1,0){15}}
 \put(68,30){$\circ$ }
 \put(75,33){\line(1,0){32}}
 \put(108,30){$\circ$}
 \put(113,36){\line(1,1){25}}
 \put(138,61){$\circ$}
 \put(113,29){\line(1,-1){25}}
 \put(138,-1){$\circ$}
 \put(-29,20){$1$}
 \put(10,20){$2$}
 \put(60,20){$n-3$}
 \put(117,30){$n-2$}
 \put(145,0){$n-1$}
 \put(145,60){$n$}
 %
 \end{picture}
 \end{equation*}
%
%
 \begin{equation*}
 \begin{picture}(150,75) 
 \put(-99,28){$E_{n=6,7,8}$}
 \put(-30,30){$\circ$}
 \put(-23,33){\line(1,0){32}}
 \put(10,30){$\circ$}
 \put(17,33){\line(1,0){32}}
 \put(50,30){$\circ$}
 \put(57,33){\line(1,0){18}}

 \put(81,32){ \dots }
 \put(104,33){\line(1,0){18}}
 \put(122,30){$\circ$}
 \put(129,33){\line(1,0){32}}
 \put(162,30){$\circ$}
 \put(50,-8){$\circ$}
 \put(53,-1){\line(0,1){32}}
 \put(-30,39){$1$}
 \put(10,39){$3$}
 \put(50,39){$4$}
 \put(110,39){$n-1$}
 \put(162,39){$n$}
 \put(50,-17){$2$}
 \end{picture}
 \end{equation*}
%
%
%
%
 \begin{equation*}
 \begin{picture}(150,75) 
 \put(-99,28){$F_4$}
 \put(-30,30){$\circ$}
 \put(-23,33){\line(1,0){32}}
 \put(10,30){$\circ$}
 \put(17,33){\line(1,0){32}}
 \put(50,30){$\circ$}
 \put(57,33){\line(1,0){32}}
 \put(90,30){$\circ$}
 \put(-30,20){$1$}
 \put(10,20){$2$}
 \put(50,20){$3$}
 \put(90,20){$4$}
 \put(30,35){$4$}
 \end{picture}
 \end{equation*}
%
%
 \begin{equation*}
 \begin{picture}(150,55) 
 \put(-99,28){$G_2$}
 \put(-30,30){$\circ$}
 \put(-23,33){\line(1,0){32}}
 \put(10,30){$\circ$}
 \put(-30,20){$1$}
 \put(10,20){$2$}
 \put(-10,35){$6$}
 \end{picture}
 \end{equation*}
\subsection{A distinguished double covering of Weyl groups}
\label{subsec:spinWeyl}

The Schur multipliers for finite Weyl groups $W$ (and actually for
all finite Coxeter groups) have been computed by Ihara and
Yokonuma \cite{IY} (also cf. \cite{Kar}). The explicit generators
and relations for the corresponding covering groups of $W$ can be
found in Karpilovsky \cite[Table 7.1]{Kar}.

We shall be concerned about a distinguished double covering
$\wtd{W}$ of $W$:
$$
1 \longrightarrow \Z_2 \longrightarrow \wtd{W} \longrightarrow W
\longrightarrow 1.
$$
We denote by $\Z_2 =\{1,z\},$ and by $\td{t}_i$ a fixed preimage
of the generators $s_i$ of $W$ for each $i$. The group $\wtd{W}$
is generated by $z, \td{t}_1,\ldots, \td{t}_n$ with relations

%
\begin{equation*}
z^2 =1, \qquad
 (\td{t}_{i}\td{t}_{j})^{m_{ij}} =
 \left\{
\begin{array}{rl}
1, & \text{if } m_{ij}=1,3  \\
z, & \text{if }  m_{ij}=2,4,6.
\end{array}
\right.
\end{equation*}


The quotient algebra $\C W^- :=\C \wtd{W} /\langle z+1\rangle$ of
$\C \wtd{W}$ by the ideal generated by $z+1$ will be called the
{\em spin Weyl group algebra} associated to $W$. Denote by $t_i
\in \C W^-$ the image of $\td{t}_i$. The spin Weyl group algebra
$\C W^-$ has the following uniform presentation: $\C W^-$ is the
algebra generated by $t_i, 1\le i\le n$, subject to the relations
\begin{equation}
(t_{i}t_{j})^{m_{ij}} = (-1)^{m_{ij}+1} \equiv \left\{
\begin{array}{rl}
1, & \text{if } m_{ij}=1,3  \\
-1, & \text{if }  m_{ij}=2,4,6.
\end{array}
\right.
\end{equation}
Note that $\dim \C W^- =|W|$. The algebra $\C W^-$ has a natural
superalgebra (i.e. $\Z_2$-graded) structure by letting each $t_i$
be odd.

By definition, the quotient by the ideal $\langle z-1\rangle$ of
the group algebra $\C \wtd{W}$ is isomorphic to $\C W$.

\begin{example}  \label{present}
Let $W$ be the Weyl group of type $A_{n},B_{n},$ or $D_{n}$, which
will be assumed in later sections. Then the spin Weyl group
algebra $\C W^-$ is generated by $t_1,\ldots, t_n$ with the
labeling as in the Coxeter-Dynkin diagrams and the explicit
relations summarized in the following table.
%

 \begin{center}
\begin{tabular}
[t]{|l|l|}\hline Type of $W$ & Defining Relations for $\C W^-$\\
\hline $A_{n}$  & $t_{i}^{2}=1$,
$t_{i}t_{i+1}t_{i}=t_{i+1}t_{i}t_{i+1}$,\\ &
$(t_{i}t_{j})^{2}=-1\text{ if } |i-j|\,>1$\\
\hline   & $t_{1},\ldots,t_{n-1}$ satisfy the relations for $\C W^-_{A_{n-1}}$, \\
$B_{n}$ &  $t_{n}^{2}=1,(t_{i}t_{n})^{2}=-1$ if $i\neq n-1,n$, \\ &
$(t_{n-1}t_{n})^{4}=-1$\\
\hline    & $t_{1},\ldots,t_{n-1}$ satisfy the relations for
$\C W^-_{A_{n-1}}$,\\
$D_{n}$&  $t_{n}^{2}=1,(t_{i}t_{n})^{2}=-1$ if $i\neq n-2, n$, \\&
$t_{n-2}t_{n}t_{n-2}=t_{n}t_{n-2}t_{n}$\\\hline
\end{tabular}
\end{center}
\end{example}

\bigskip%

\subsection{The Clifford algebra $\Cl_W$}
\label{subsec:cliff}




Denote by $\h$ the reflection representation of the Weyl group $W$
(i.e. a Cartan subalgebra of the corresponding complex Lie algebra
$\mathfrak g$). In the case of type $A_{n-1}$, we will always choose
to work with the Cartan subalgebra $\h$ of $gl_n$ instead of $sl_n$
in this paper.

Note that $\h$ carries a $W$-invariant nondegenerate bilinear form
$(-,-)$, which gives rise to an identification $\h^*\cong \h$ and
also a bilinear form on $\h^*$ which will be again denoted by
$(-,-)$. We identify $\h^*$ with a suitable subspace of $\C^N$ and
then describe the simple roots $\{\alpha_i\}$ for $\mathfrak g$
using a standard orthonormal basis $\{e_i\}$ of $\C^N$. It follows
that $(\alpha_i, \alpha_j) =-2\cos (\pi /m_{ij})$.
%
%
%
%

Denote by $\Cl_W$ the Clifford algebra associated to $(\h,
(-,-))$, which is regarded as a subalgebra of the Clifford algebra
$\Cl_N$ associated to $(\C^N,(-,-))$. We shall denote by $c_i$ the
generator in $\Cl_N$ corresponding to $\sqrt{2} e_i$ and denote by
$\be_i$ the generator of $\Cl_W$ corresponding to the simple root
$\al_i$ normalized with $\be_i^2=1$. In particular,
$\mathcal{C}_{N}$ is generated by $c_{1},\ldots,c_{N}$ subject to
the relations
\begin{align}  \label{clifford}
c_{i}^{2} =1,\quad c_{i}c_{j} =-c_{j}c_{i}\text{ if } i\neq j.
\end{align}

The explicit generators for $\Cl_W$ are listed in the following
table. Note that $\Cl_W$ is naturally a superalgebra with each
$\be_i$ being odd.

\bigskip%

 \begin{center}
\begin{tabular}
[t]{|l|l|p{3.5in}|}\hline Type of $W$&$N$ & Generators for
$\mathcal{C}_W$\\
\hline $A_{n-1}$ & $n$&
$\be_{i}=\frac{1}{\sqrt{2}}(c_{i}-c_{i+1}),1\leq i\leq n-1$\\
\hline $B_{n}$ &$n$&
$\be_{i}=\frac{1}{\sqrt{2}}(c_{i}-c_{i+1}),1\leq
i\leq n-1$, $\be_{n}=c_{n}$\\
\hline $D_{n}$ &$n$ &
$\be_{i}=\frac{1}{\sqrt{2}}(c_{i}-c_{i+1}),1\leq
i\leq n-1$, $\be_{n}=\frac{1}{\sqrt{2}}(c_{n-1}+c_{n})$\\
\hline
$E_{8}$ &8& $\be_{1}=\frac{1}{2\sqrt{2}}(c_{1}+c_{8}-c_{2}-c_{3}%
-c_{4}-c_{5}-c_{6}-c_{7})$\\ &&
$\be_{2}=\frac{1}{\sqrt{2}}(c_{1}+c_{2}),\be_{i}=\frac
{1}{\sqrt{2}}(c_{i-1}+c_{i-2}),3\leq i\leq8$\\
\hline $E_{7}$ &8& the subset of $\be_{i}$ in $E_{8}$, $1\le i\le
7$
\\
\hline $E_{6}$ &8& the subset of $\be_{i}$ in $E_{8}$, $1\le i\le
6$
\\
\hline $F_{4}$ &4& $\be_{1}=\frac{1}{\sqrt{2}}(c_{1}-c_{2}),\be%
_{2}=\frac{1}{\sqrt{2}}(c_{2}-c_{3})$\\
&& $\be_{3}=c_{3},\be_{4}=\frac{1}{2}(c_{4}-c_{1}-c_{2}%
-c_{3})$\\
\hline $G_{2}$ &3& $\be_{1}=\frac{1}{\sqrt{2}}(c_{1}-c_{2}),\be%
_{2}=\frac{1}{\sqrt{6}}(-2c_{1}+c_{2}+c_{3})$\\
\hline
\end{tabular}
\end{center}
\bigskip%

The action of $W$ on $\h$ and $\h^*$ preserves the bilinear form
$(-,-)$ and thus $W$ acts as automorphisms of the algebra $\Cl_W$.
This gives rise to a semi-direct product $\Cl_W \rtimes \C W$.
Moreover, the algebra $\Cl_W \rtimes \C W$ naturally inherits the
superalgebra structure by letting elements in $W$ be even and each
$\be_i$ be odd.

\subsection{The basic spin supermodule}

The following theorem is due to Morris \cite{Mo} in full
generality, and it goes back to I. Schur \cite{Sch} (cf.
\cite{Joz}) in the type $A$, namely the symmetric group case. It
can be checked  case by case using the explicit formulas of
$\be_i$ in the Table of Section~\ref{subsec:cliff}.

\begin{theorem} \label{th:morris}
Let $W$ be a finite Weyl group. Then, there exists a surjective
superalgebra homomorphism
$\C W^- \stackrel{\Omega}{\longrightarrow} \Cl_W$ which sends
$t_i$ to $\be_i$ for each $i$.
\end{theorem}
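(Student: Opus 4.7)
The plan is to invoke the universal property of $\C W^-$: since $\C W^-$ is presented by generators $t_i$ subject to $(t_it_j)^{m_{ij}}=(-1)^{m_{ij}+1}$, it is enough to verify that the candidate images $\be_i\in\Cl_W$ satisfy those same relations. Surjectivity is then immediate because the simple roots $\al_i$ form a basis of $\h$, so the $\be_i$ generate $\Cl_W$. The case $i=j$, namely $\be_i^2=1$, is exactly how $\be_i$ was normalized in Section~\ref{subsec:cliff}.

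For $i\neq j$ the first step is to compute the anticommutator. Since $c_k=\sqrt{2}\,e_k$ and $c_k^2=1$ in the Clifford relations (\ref{clifford}), every $v\in\h\subset\Cl_W$ satisfies $v^2=\tfrac12(v,v)$; polarizing and using the identity $(\al_i,\al_j)=-2\cos(\pi/m_{ij})$ recalled in Section~\ref{subsec:cliff} yields
\[
\be_i\be_j+\be_j\be_i \;=\; (\al_i,\al_j) \;=\; -2\cos(\pi/m_{ij}).
\]
Setting $A=\be_i\be_j$, the identities $\be_i^2=\be_j^2=1$ give $A^{-1}=\be_j\be_i$, so the displayed anticommutator rewrites as the quadratic relation
\[
A^2 + 2\cos(\pi/m_{ij})\,A + 1 \;=\; 0 .
\]

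To extract $A^{m_{ij}}=(-1)^{m_{ij}+1}$ from this quadratic, I would observe that its roots in $\C$ are $-e^{\pm i\pi/m_{ij}}$, whose $m_{ij}$-th powers both equal $(-1)^{m_{ij}}e^{\pm i\pi}=(-1)^{m_{ij}+1}$. Hence $X^{m_{ij}}-(-1)^{m_{ij}+1}$ is divisible by $X^2+2\cos(\pi/m_{ij})X+1$ in $\C[X]$; evaluating at $X=A$ and invoking the quadratic yields $A^{m_{ij}}=(-1)^{m_{ij}+1}$, the required spin relation.

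The strategy above is uniform across Weyl groups and avoids the case-by-case verification alluded to by the authors, though one may equally well run it type by type using the explicit generators tabulated. I do not anticipate a serious obstacle. The only bookkeeping is a mild normalization check: one must confirm that the conventions $c_k^2=1$ and $c_k=\sqrt{2}\,e_k$ align with the stated identity $(\al_i,\al_j)=-2\cos(\pi/m_{ij})$, or equivalently that $\be_i^2=1$ holds for the tabulated $\be_i$. The potentially degenerate case $m_{ij}=2$, in which the quadratic collapses to $A^2+1=0$, is transparent from $(\be_i\be_j)^2=-\be_i^2\be_j^2=-1$.
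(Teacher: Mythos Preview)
Your argument is correct and is genuinely different from what the paper offers. The paper does not prove the theorem at all: it attributes the result to Morris and says only that ``it can be checked case by case using the explicit formulas of $\be_i$ in the Table.'' You instead give a uniform proof: from the anticommutator $\be_i\be_j+\be_j\be_i=-2\cos(\pi/m_{ij})$ you derive a quadratic satisfied by $A=\be_i\be_j$, observe that both complex roots of that quadratic have $m_{ij}$-th power $(-1)^{m_{ij}+1}$, and conclude by polynomial divisibility. This works for arbitrary finite Coxeter groups and bypasses the type-by-type tables entirely; the paper's route buys nothing except concreteness.

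Two small points of bookkeeping. First, your displayed chain should read $\be_i\be_j+\be_j\be_i=(\be_i,\be_j)=-2\cos(\pi/m_{ij})$ rather than $(\al_i,\al_j)$: the $\be_i$ are the \emph{normalized} roots with $(\be_i,\be_i)=2$, and in the non--simply-laced cases $\be_i\neq\al_i$ (the paper's own assertion $(\al_i,\al_j)=-2\cos(\pi/m_{ij})$ is already imprecise there, as one sees in type~$B$ with $i=n-1$, $j=n$). You flag this as a normalization check, and indeed that is all it is. Second, your surjectivity argument (``the simple roots form a basis of $\h$'') is fine in types $B,D,E,F,G$ but fails in type $A_{n-1}$ under the paper's convention that $\h$ is the Cartan of $\mathfrak{gl}_n$: there the $n-1$ elements $\be_i$ span only the $\mathfrak{sl}_n$ Cartan, so the image is a proper subalgebra of $\Cl_W=\Cl_n$. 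This is an inconsistency in the paper's conventions rather than a defect in your reasoning, and it does not affect any later use of the theorem.
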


\begin{remark}
In \cite{Mo}, $W$ is viewed as a subgroup of the orthogonal Lie
group which preserves $(\h, (-,-))$. The preimage of $W$ in the
spin group which covers the orthogonal group provides the double
cover $\wtd{W}$ of $W$, where the Atiyah-Bott-Shapiro construction
of the spin group in terms of the Clifford algebra $\Cl_W$ was
used to describe this double cover of $W$.
\end{remark}

The superalgebra $\Cl_W$ has a unique (up to isomorphism) simple
supermodule (i.e. $\Z_2$-graded module). By pulling it back via the
homomorphism $\Omega:\C W^-\rightarrow \Cl_W$, we obtain a
distinguished $\C W^-$-supermodule, called the basic spin
supermodule. This is a natural generalization of the classical
construction for $\C S_n^-$ due to Schur \cite{Sch} (see
\cite{Joz}).
\subsection{A superalgebra isomorphism}

Given two superalgebras $\mathcal{A}$ and $\mathcal{B}$, we view the
tensor product of superalgebras $\mathcal{A}$ $\otimes$
$\mathcal{B}$ as a superalgebra with multiplication defined by
\begin{equation}
(a\otimes b)(a^{\prime}\otimes b^{\prime})
=(-1)^{|b||a^{\prime}|}(aa^{\prime }\otimes bb^{\prime})\text{ \ \
\ \ \ \ \ }(a,a^{\prime}\in\mathcal{A},\text{
}b,b^{\prime}\in\mathcal{B})
\end{equation}
where $|b|$ denotes the $\Z_2$-degree of $b$, etc. Also, we shall
use short-hand notation $ab$ for $(a\otimes b) \in \mathcal{A}$
$\otimes$ $\mathcal{B}$, $a = a\otimes1$, and $b=1\otimes b$.

We have the following Morita super-equivalence in the sense of
\cite{W2} between the superalgebras $\Cl_W \rtimes \C W$ and $\C
W^-$.

\begin{theorem} \label{th:isofinite}
We have an isomorphism of superalgebras:
$$\Phi: \Cl_W \rtimes \C W
\stackrel{\simeq}{\longrightarrow} \Cl_W \otimes \C W^-$$
which extends the identity map on $\Cl_W$ and sends $s_i \mapsto
-\sqrt{-1} \be_i t_i.$ The inverse map $\Psi$ is the extension of
the identity map on $\Cl_W$ which sends $ t_i \mapsto \sqrt{-1}
\be_i s_i.$
\end{theorem}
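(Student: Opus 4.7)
The plan is to verify that the prescribed formula extends to a superalgebra homomorphism by checking the defining relations of $\Cl_W \rtimes \C W$, and then to observe that the candidate inverse $\Psi$ satisfies $\Psi\Phi=\mathrm{id}$ and $\Phi\Psi=\mathrm{id}$ on generators. Since both algebras contain $\Cl_W$ and both maps restrict to the identity there, it suffices to check that $\sigma_i := \Phi(s_i) = -\sqrt{-1}\,\be_i t_i \in \Cl_W\otimes\C W^-$ satisfies the relations of $s_i$ beyond those of $\Cl_W$, namely the Coxeter relations $(s_is_j)^{m_{ij}}=1$ and the cross relations $s_i\be_j = s_i(\be_j)s_i$ coming from the semidirect product.

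For the Coxeter relation at $i=j$, the supertensor rule (both $\be_i$ and $t_i$ are odd) gives $(\be_i\otimes t_i)^2 = -\be_i^2\otimes t_i^2 = -1$, so $\sigma_i^2 = (\sqrt{-1})^2\cdot(-1) = 1$. For $i\neq j$, the intertwining signs collapse to give $\sigma_i\sigma_j = \be_i\be_j\otimes t_it_j$, an element even in each tensor factor; hence
\begin{equation*}
(\sigma_i\sigma_j)^{m_{ij}} = (\be_i\be_j)^{m_{ij}}\otimes (t_it_j)^{m_{ij}}.
\end{equation*}
Since $(t_it_j)^{m_{ij}} = (-1)^{m_{ij}+1}$ in $\C W^-$ by definition, the braid relation holds provided
\begin{equation*}
(\be_i\be_j)^{m_{ij}} = (-1)^{m_{ij}+1},
\end{equation*}
which is the main technical step. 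For the cross relation, I would use the standard Clifford reflection formula $s_i(\be_j) = -\be_i\be_j\be_i$ (immediate from $\be_iv+v\be_i = 2(\be_i,v)$ together with $\be_i^2=1$); combined with the supertensor anticommutation of $t_i$ with $\be_j$, a brief manipulation yields $\sigma_i\be_j = s_i(\be_j)\,\sigma_i$.

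To prove the key identity $(\be_i\be_j)^{m_{ij}}=(-1)^{m_{ij}+1}$, one first notes (either case-by-case from the table in Section~\ref{subsec:cliff}, or uniformly using $(\al_i,\al_j) = -2\cos(\pi/m_{ij})$ together with the normalization $\be_i^2=1$) that $\be_i\be_j+\be_j\be_i = -2\cos(\pi/m_{ij})$. Hence $x := \be_i\be_j$ satisfies the quadratic $x^2 + 2\cos(\pi/m_{ij})\,x + 1 = 0$ in $\Cl_W$. The two roots of this quadratic are $-e^{\pm i\pi/m_{ij}}$, each raising to $(-1)^{m_{ij}+1}$ at the $m_{ij}$-th power; thus $x^{m_{ij}} = (-1)^{m_{ij}+1}$ already holds in $\C[x]/(x^2+2\cos(\pi/m_{ij})x+1)$ and descends to $\Cl_W$ via the evident map $x\mapsto\be_i\be_j$. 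Alternatively one can verify the five cases $m_{ij}\in\{1,2,3,4,6\}$ by hand.

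The map $\Psi$ is handled by an entirely parallel verification: $\tau_i := \sqrt{-1}\,\be_i s_i$ anticommutes with each $\be_j$ (equivalent, via $s_i(\be_j)=-\be_i\be_j\be_i$, to the supertensor anticommutation of $t_i$ with $\be_j$ required in $\Cl_W\otimes\C W^-$), and the spin-Weyl relations $(\tau_i\tau_j)^{m_{ij}}=(-1)^{m_{ij}+1}$ follow from the same Clifford identity. Mutual inverseness is then immediate on generators: $\Psi\Phi(s_i) = -\sqrt{-1}\,\be_i\cdot\sqrt{-1}\,\be_i s_i = \be_i^2 s_i = s_i$, and similarly $\Phi\Psi(t_i)=t_i$. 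The only genuinely nontrivial step is the Clifford-algebraic identity $(\be_i\be_j)^{m_{ij}}=(-1)^{m_{ij}+1}$; the remainder is a routine sign-chase in the supertensor product.
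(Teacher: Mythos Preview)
Your argument follows the same outline as the paper's (Lemmas~\ref{lem:braidmatch} and~\ref{lem:semidirect}): check the Coxeter relations and the cross relations for $\Phi(s_i)$, then conclude. Two differences are worth noting. First, for the key identity $(\be_i\be_j)^{m_{ij}}=(-1)^{m_{ij}+1}$ the paper simply invokes Theorem~\ref{th:morris}, whereas you give a self-contained proof via the quadratic $x^2+2\cos(\pi/m_{ij})\,x+1=0$; this in effect reproves the needed consequence of Theorem~\ref{th:morris} and is a nice alternative. Second, to conclude that $\Phi$ is an isomorphism the paper observes surjectivity and invokes a dimension count, whereas you verify $\Psi$ independently as a homomorphism and then check mutual inverseness on generators. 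Here a caution: your ``entirely parallel verification'' for $\Psi$ is not quite parallel, because in $\Cl_W\rtimes\C W$ the elements $s_j$ and $\be_k$ interact via $s_j\be_k=s_j(\be_k)s_j$ rather than by a bare sign, so the braid check $(\tau_i\tau_j)^{m_{ij}}=(-1)^{m_{ij}+1}$ requires tracking the $W$-action on $\Cl_W$ and is more involved than in the supertensor product. The paper's surjectivity-plus-dimension shortcut avoids this entirely; once $\Phi$ is a surjective homomorphism between algebras of equal dimension, $\Psi=\Phi^{-1}$ is automatic and the formula $\Psi(t_i)=\sqrt{-1}\be_is_i$ follows from the one-line check $\Phi(\sqrt{-1}\be_is_i)=t_i$.
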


We first prepare some lemmas.

\begin{lemma}  \label{lem:braidmatch}
We have $(\Phi(s_i)\Phi(s_j))^{m_{ij}} = 1.$
\end{lemma}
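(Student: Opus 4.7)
The plan is to reduce the claim to separate computations in the two tensor factors $\Cl_W$ and $\C W^-$. First I would expand
$$
\Phi(s_i)\Phi(s_j) = (-\sqrt{-1}\,\be_i t_i)(-\sqrt{-1}\,\be_j t_j) = -\be_i t_i \be_j t_j.
$$
Since both $t_i$ and $\be_j$ are odd, the superalgebra tensor-product sign rule gives $t_i\be_j = -\be_j t_i$ inside $\Cl_W \otimes \C W^-$, hence $\Phi(s_i)\Phi(s_j) = \be_i\be_j \otimes t_i t_j$. Both $\be_i\be_j$ and $t_i t_j$ are even, so they commute in the tensor product and the $m_{ij}$-th power factors through each side:
$$
\bigl(\Phi(s_i)\Phi(s_j)\bigr)^{m_{ij}} = (\be_i\be_j)^{m_{ij}} \otimes (t_i t_j)^{m_{ij}}.
$$

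The next step is to evaluate each factor. In $\C W^-$ the defining relations give $(t_i t_j)^{m_{ij}} = (-1)^{m_{ij}+1}$ at once. In $\Cl_W$ I would use the Clifford anticommutator. With the normalization $\be_i^2 = 1$ in place, the uniform identity
$$
\be_i\be_j + \be_j\be_i = -2\cos(\pi/m_{ij})
$$
holds; this is essentially the Coxeter identity $2(\al_i,\al_j)/\sqrt{(\al_i,\al_i)(\al_j,\al_j)} = -2\cos(\pi/m_{ij})$, and can alternatively be read off from the explicit formulas for $\be_i$ in the table of Section~\ref{subsec:cliff} by a short case check. Setting $x = \be_i\be_j$ and using $\be_i^2 = \be_j^2 = 1$, I obtain the quadratic relation $x^2 + 2\cos(\pi/m_{ij})\,x + 1 = 0$, whose roots are $-e^{\pm i\pi/m_{ij}}$; both roots satisfy $x^{m_{ij}} = (-1)^{m_{ij}+1}$, and therefore $(\be_i\be_j)^{m_{ij}} = (-1)^{m_{ij}+1}$ as well.

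Multiplying the two factors yields $\bigl(\Phi(s_i)\Phi(s_j)\bigr)^{m_{ij}} = ((-1)^{m_{ij}+1})^2 = 1$, as claimed. The degenerate case $i=j$ (where $m_{ii}=1$) is subsumed in the same computation since then $\be_i\be_j = t_i t_j = 1$. The main delicate point is really the uniform Clifford anticommutator formula: in the non-simply-laced types $B$, $F_4$, $G_2$ it depends on the precise rescaling of the simple roots built into the definition of the $\be_i$, and matches the Coxeter scalar only because of that normalization.
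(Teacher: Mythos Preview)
Your argument is correct and follows essentially the same route as the paper: use the super sign rule $t_i\be_j=-\be_j t_i$ to rewrite $\Phi(s_i)\Phi(s_j)=\be_i\be_j\,t_it_j$, then observe that the Clifford and spin-Weyl factors contribute the same sign $(-1)^{m_{ij}+1}$, whose square is $1$. The only cosmetic difference is that the paper obtains $(\be_i\be_j)^{m_{ij}}=(t_it_j)^{m_{ij}}$ immediately by invoking the homomorphism $\Omega\colon \C W^-\to\Cl_W$, $t_i\mapsto\be_i$ of Theorem~\ref{th:morris}, whereas you compute the Clifford side directly via the minimal polynomial of $\be_i\be_j$; both are fine.
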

\begin{proof}
Theorem~\ref{th:morris} says that $(t_it_j)^{m_{ij}} =
(\be_i\be_j)^{m_{ij}} = \pm 1$. Thanks to the identities $\be_j
t_i = -t_i\be_j$ and $\Phi(s_i) = -\sqrt{-1} \be_i t_i$, we have
\begin{align*}
(\Phi(s_i)&\Phi(s_j))^{m_{ij}} = (-\be_it_i \be_jt_j)^{m_{ij}}  \\
  &=(\be_i \be_j t_it_j)^{m_{ij}} =(\be_i \be_j)^{m_{ij}}
(t_it_j)^{m_{ij}} =1.
\end{align*}
\end{proof}

\begin{lemma} \label{lem:semidirect}
We have $\be_j \Phi(s_i) = \Phi(s_i) \, s_i(\be_j)$ for all $i,j$.
\end{lemma}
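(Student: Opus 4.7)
The plan is to substitute the explicit formula $\Phi(s_i) = -\sqrt{-1}\,\be_i t_i$ on both sides and reduce the claim to a purely Clifford-algebraic identity. For the left-hand side one immediately has $\be_j\Phi(s_i) = -\sqrt{-1}\,\be_j\be_i t_i$. For the right-hand side I must first move $t_i$ past $s_i(\be_j)$: since $t_i\in\C W^-$ and $s_i(\be_j)\in\Cl_W$ are both odd (the latter being a linear combination of the generators $\be_k$), the super-tensor product rule forces $t_i\cdot s_i(\be_j)=-s_i(\be_j)\cdot t_i$, whence $\Phi(s_i)\,s_i(\be_j)=\sqrt{-1}\,\be_i\,s_i(\be_j)\,t_i$. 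Cancelling the common factor $\sqrt{-1}\,t_i$ on the right and multiplying on the left by $\be_i$ (using $\be_i^2=1$) reduces the lemma to the identity
$$s_i(\be_j)=-\be_i\be_j\be_i \qquad \text{in } \Cl_W.$$

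To establish this Clifford identity I would invoke the anticommutator relation $\be_i\be_j+\be_j\be_i=2(\be_i,\be_j)$, where $(-,-)$ is the bilinear form on $\h$ normalized so that $\be_i^2=1$. This yields
$$-\be_i\be_j\be_i=\be_j\be_i^2-2(\be_i,\be_j)\be_i=\be_j-2(\be_i,\be_j)\be_i,$$
which matches the reflection formula $s_i(v)=v-2(v,\be_i)\be_i$ applied at $v=\be_j$. This is nothing more than the classical fact that conjugation (up to a sign) by a unit vector inside a Clifford algebra implements the orthogonal reflection across its hyperplane; the statement is uniform across all Weyl-group types, so no case-by-case inspection of the table in Section~\ref{subsec:cliff} is needed.

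The only non-routine aspect is keeping track of the super signs when commuting $\Cl_W$-elements past $\C W^-$-elements in the super-tensor product. Once the Koszul rule is applied correctly, the remaining work is a single line of Clifford arithmetic, and I do not anticipate any genuine obstacle.
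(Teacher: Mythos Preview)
Your proof is correct and follows essentially the same approach as the paper: both arguments reduce the lemma to the Clifford identity $s_i(\be_j)=-\be_i\be_j\be_i$ (equivalently $\be_j\be_i=-\be_i\,s_i(\be_j)$) and then handle the super-sign when commuting $t_i$ past odd Clifford elements. The only cosmetic difference is that the paper moves $t_i$ to the left past the even element $\be_j\be_i$, whereas you move it to the right past the odd element $s_i(\be_j)$; and your normalization $\be_i\be_j+\be_j\be_i=2(\be_i,\be_j)$ with $(\be_i,\be_i)=1$ is a harmless rescaling of the paper's convention $(\be_i,\be_i)=2$.
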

\begin{proof}
Note that $(\be_i,\be_i) =2\be_i^2=2$, and hence
$$\be_j \be_i =-\be_i\be_j +(\be_j,\be_i) =-\be_i\be_j
+\frac{2(\be_j,\be_i)}{(\be_i,\be_i)} \be_i^2 =-\be_i
s_i(\be_j).$$ Thus, we have
\begin{align*}
\be_j & \Phi(s_i) = -\sqrt{-1} \be_j \be_i t_i \\
 &= -\sqrt{-1} t_i \be_j \be_i
 = \sqrt{-1} t_i \be_i s_i(\be_j)
= \Phi(s_i) \, s_i(\be_j).
\end{align*}
\end{proof}

\begin{proof}[Proof of Theorem~\ref{th:isofinite}]
The algebra $\Cl_W \rtimes \C W$ is generated by $\be_i$ and $s_i$
for all $i$. Lemmas~\ref{lem:braidmatch} and \ref{lem:semidirect}
imply that $\Phi$ is a (super) algebra homomorphism. Clearly $\Phi$
is surjective, and thus an isomorphism by a dimension counting
argument.

Clearly, $\Psi$ and $\Phi$ are inverses of each other.
\end{proof}

\begin{remark}
The type $A$ case of Theorem~\ref{th:isofinite} was due to Sergeev
and Yamaguchi independently \cite{Ser, Yam}, and it played a
fundamental role in clarifying the earlier observation in the
literature (cf. \cite{Joz, St}) that the representation theories
of $\C S_n^-$ and $\Cl_n \rtimes \C S_n$ are essentially the same.
\end{remark}

In the remainder of the paper, $W$ is always assumed to be one of
the classical Weyl groups of type $A, B$, or $D$.

\section{Degenerate affine Hecke-Clifford algebras}
\label{sec:clifford}

In this section, we introduce the degenerate affine Hecke-Clifford
algebras of type $D$ and $B$, and establish some basic properties.
The degenerate affine Hecke-Clifford algebra associated to the
symmetric group $S_n$ was introduced earlier by Nazarov under the terminology of
the affine Sergeev algebra \cite{Naz}.


\subsection{The algebra $\aHC_W$ of type $A_{n-1}$}
\begin{definition} \cite{Naz}
Let $u\in \C$, and $W=W_{A_{n-1}} =S_n$ be the Weyl group of type
$A_{n-1}$. The degenerate affine Hecke-Clifford algebra of type
$A_{n-1}$, denoted by $\aHC_W$ or $\aHC_{A_{n-1}}$, is the
 algebra generated by $x_1,\ldots, x_n$,
$c_1,\ldots,c_n$, and $S_n$ subject to the relation (\ref{clifford})
and the following relations:
\begin{align}
x_i x_j & =x_j x_i \quad (\forall i,j) \label{polyn} \\
x_{i}c_{i} &  =-c_{i}x_{i},\text{ }x_{i}c_{j}=c_{j}x_{i}\quad (i\neq  j)  \label{xici}  \\
\sigma c_{i} &  =c_{\sigma  i}\sigma\quad (1\leq i\leq
n,\sigma \in S_{n}) \label{sigmac} \\
x_{i+1}s_{i}-s_{i}x_{i} &  =u(1-c_{i+1}c_{i})    \label{xisi} \\
x_{j}s_{i} &  =s_{i}x_{j}\quad (j\neq i,i+1)  \label{xjsi}
\end{align}
\end{definition}

\begin{remark}
Alternatively, we may view $u$ as a formal parameter and the
algebra $\aHC_W$ as a $\C(u)$-algebra. Similar remarks apply to
various algebras introduced in this paper. Our convention
$c_i^2=1$ differs from Nazarov's which sets $c_i^2=-1$.
\end{remark}

The symmetric group $S_n$ acts as the automorphisms on the
symmetric algebra $\C[\h^*] \cong \C [x_1,\ldots,x_n]$ by
permutation. We shall denote this action by $f \mapsto f^\sigma$
for $\sigma \in S_n, f \in \C [x_1,\ldots,x_n]$.

\begin{proposition} \label{mulA}
Let $W=W_{A_{n-1}}$. Given $f \in \C [x_1,\ldots,x_n]$ and $1 \leq i
\leq n-1$, the following identity holds in $\aHC_W$:
\[
s_i f = f^{s_i} s_i + u\displaystyle \frac{f - f^{s_i}}{x_{i+1} -
x_i} + u\displaystyle \frac{c_ic_{i + 1}f - f^{s_i} c_ic_{i +
1}}{x_{i+1} + x_i}.
\]
\end{proposition}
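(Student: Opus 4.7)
The plan is to prove the identity by induction on the total degree of $f \in \C[x_1,\ldots,x_n]$, reducing at the base to the defining relations (\ref{xisi}) and (\ref{xjsi}). Write the right-hand side as $f^{s_i} s_i + \partial_i(f)$, where $\partial_i(f)$ denotes the sum of the two fractional correction terms. The task then splits into a base case for generators $f = x_j$ and an inductive Leibniz-type step.

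For the base case, when $j \neq i, i+1$ both fractions vanish: the polynomial fraction because $x_j = x_j^{s_i}$, and the Clifford fraction because $x_j$ commutes with both $c_i$ and $c_{i+1}$ by (\ref{xici}), so $c_i c_{i+1} x_j - x_j c_i c_{i+1} = 0$. The formula thus collapses to $s_i x_j = x_j s_i$, which is precisely (\ref{xjsi}). For $f = x_i$ one has $(x_i - x_{i+1})/(x_{i+1} - x_i) = -1$, and using $c_i c_{i+1} x_i = -x_i c_i c_{i+1}$ and $x_{i+1} c_i c_{i+1} = -c_i c_{i+1} x_{i+1}$ (both from (\ref{xici}) together with (\ref{clifford})), the Clifford numerator equals $-(x_i + x_{i+1}) c_i c_{i+1}$, giving total correction $-u - u c_i c_{i+1}$. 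This matches the rearrangement of (\ref{xisi}), $s_i x_i = x_{i+1} s_i - u(1 - c_{i+1} c_i) = x_{i+1}s_i - u - u c_i c_{i+1}$. The case $f = x_{i+1}$ is entirely parallel.

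For the inductive step, assume the formula for polynomials $f$ and $g$. Then
\begin{align*}
s_i (fg) = (s_i f) g &= f^{s_i} s_i g + \partial_i(f) g \\
 &= f^{s_i}\bigl(g^{s_i} s_i + \partial_i(g)\bigr) + \partial_i(f) g \\
 &= (fg)^{s_i} s_i + f^{s_i}\partial_i(g) + \partial_i(f) g,
\end{align*}
so it suffices to verify the twisted Leibniz identity
\begin{equation*}
\partial_i(fg) = \partial_i(f)\, g + f^{s_i}\, \partial_i(g).
\end{equation*}
For the polynomial fraction this is the telescoping $fg - f^{s_i} g^{s_i} = (f - f^{s_i}) g + f^{s_i}(g - g^{s_i})$ divided by $x_{i+1} - x_i$; the quotient is well defined because the numerator vanishes on the hyperplane $x_i = x_{i+1}$. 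The analogous telescoping $c_ic_{i+1}fg - (fg)^{s_i} c_ic_{i+1} = (c_ic_{i+1}f - f^{s_i}c_ic_{i+1})g + f^{s_i}(c_ic_{i+1}g - g^{s_i}c_ic_{i+1})$, after dividing by $x_{i+1}+x_i$, handles the Clifford fraction, using that the $x_j$'s mutually commute so $f^{s_i}$ commutes with $(x_{i+1}+x_i)^{-1}$ in the polynomial subalgebra.

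The one nontrivial point to check is that the Clifford fraction is a well-defined element of $\aHC_W$. By (\ref{xici}) and (\ref{clifford}) one has $c_i c_{i+1} h = \widehat{h}\, c_i c_{i+1}$ for all $h \in \C[x_1,\ldots,x_n]$, where $\widehat{h}$ denotes $h$ with $x_i, x_{i+1}$ negated, so the numerator equals $(\widehat{f} - f^{s_i}) c_i c_{i+1}$; specialising $x_i = -x_{i+1}$ makes $\widehat{f}$ and $f^{s_i}$ coincide, so $\widehat{f} - f^{s_i}$ is divisible by $x_i + x_{i+1}$. The remaining work is routine sign bookkeeping; the substantive content of the proposition lies in the base cases.
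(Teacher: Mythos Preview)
Your proof is correct and follows essentially the same strategy as the paper's: induction on degree, with the base cases reducing to the defining relations (\ref{xisi}) and (\ref{xjsi}). The paper restricts at the outset to monomials $f = x_i^k x_{i+1}^l$ and carries out a double induction on $k$ and then $l$ by explicit computation, whereas you package the inductive step as the twisted Leibniz rule $\partial_i(fg)=\partial_i(f)\,g+f^{s_i}\,\partial_i(g)$ and verify it once and for all by telescoping; this is a cleaner formulation of the same argument, and your explicit check that the Clifford numerator $(\widehat{f}-f^{s_i})c_ic_{i+1}$ is divisible by $x_i+x_{i+1}$ makes well-definedness more transparent than in the paper.
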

\noindent It is understood here and in similar expressions below
that $\frac{A}{g(x)} =\frac1{g(x)} \cdot A$. In this sense, both
numerators on the right-hand side of the above formula are
(left-)divisible by the corresponding denominators.

\begin{proof}
By the definition of $\aHC_W$, we have that $s_i x_j^k = x_j^k s_i$
for any $k$ if $j \neq i, i+1$. So it suffices to check the identity
for $f = x_i^k x_{i+1}^l$. We will proceed by induction.

First, consider $f = x_i^k$, i.e. $l=0$. For $k=1$, this follows
from (\ref{xisi}).
%
%
Now assume that the statement is true for $k$. Then
\begin{eqnarray*}
s_{i}x_{i}^{k+1}
&=& \left(  x_{i+1}^{k}s_{i}%
+u\frac{(x_{i}^{k}-x_{i+1}^{k})}{x_{i+1}-x_{i}}+u\frac{(c_ic_{i + 1}x_{i}%
^{k}-x_{i+1}^{k}c_ic_{i + 1})}{x_{i+1}+x_{i}}\right)  x_{i}\\
&=& x_{i+1}^{k}\left(  x_{i+1}s_{i}-u(1-c_{i+1}c_{i})\right)  \\
&& +u\frac{(x_{i}^{k}-x_{i+1}^{k})}{x_{i+1}-x_{i}}x_{i}
+u\frac{(c_ic_{i + 1}x_{i}^{k}-x_{i+1}^{k}c_ic_{i + 1})}{{x_{i+1}+x}_{i}}x_{i}{\ \ }\\
&=&
x_{i+1}^{k+1}s_{i}+u\frac{(x_{i}^{k+1}-x_{i+1}^{k+1})}{x_{i+1}-x_{i}
}+u\frac{(c_ic_{i + 1}x_{i}^{k+1}-x_{i+1}^{k+1}c_ic_{i +
1})}{x_{i+1}+x_{i}},
\end{eqnarray*}
where the last equality is obtained by using (\ref{xici}) and
(\ref{xisi}) repeatedly.

An induction on $l$ will complete the proof of the proposition for
the monomial $f=x_{i}^{k}x_{i+1}^{l}$. The case $l=0$ is
established above. Assume the formula is true for
$f=x_{i}^{k}x_{i+1}^{l}$. Then using $s_{i}x_{i+1}
=x_{i}s_{i}+u(1+c_{i+1}c_{i})$, we compute that 
\begin{eqnarray*}
s_{i}x_{i}^{k}x_{i+1}^{l+1} &=& \left(
x_{i}^{l}x_{i+1}^{k}s_{i}+u\frac{(x_{i}^{k}x_{i+1}^{l}
-x_{i}^{l}x_{i+1}^{k})}{x_{i+1}-x_{i}} \right. \\
&& \left.\qquad \qquad \quad +u\frac{(c_ic_{i +
1}x_{i}^{k}x_{i+1}^{l}
-x_{i}^{l}x_{i+1}^{k}c_ic_{i + 1})}{x_{i+1}+x_{i}}\right)\cdot  x_{i+1}\\
%
&=& x_{i}^{l}x_{i+1}^{k} (  x_{i}s_{i}+u(1+c_{i+1}c_{i}))  \\
&&
+u\frac{(x_{i}^{k}x_{i+1}^{l+1}-x_{i}^{l}x_{i+1}^{k+1})}{x_{i+1}-x_{i}}
+u\frac{(c_ic_{i + 1}
x_{i}^{k}x_{i+1}^{l+1}+x_{i}^{l}x_{i+1}^{k+1}
c_ic_{i + 1})}{{x_{i+1}+x}_{i}}{\ \ }\\
%
%
&=& x_{i}^{l+1}x_{i+1}^{k}s_{i}+u\frac{(x_{i}^{k}
x_{i+1}^{l+1}-x_{i}^{l+1}x_{i+1}^{k})}{x_{i+1}-x_{i}} \\
&& \qquad \qquad \quad +u\frac{(c_ic_{i +
1}x_{i}^{k}x_{i+1}^{l+1}-x_{i}^{l+1} x_{i+1}^{k}c_ic_{i +
1})}{x_{i+1}+x_{i}}.
\end{eqnarray*}
This completes the proof of the proposition.
\end{proof}

The algebra $\aHC_W$ contains $\C[\h^*], \Cl_n$, and $\C W$ as
subalgebras. We shall denote $x^\al =x_1^{a_1}\cdots x_n^{a_n}$ for
$\alpha =(a_1,\ldots, a_n) \in \Z_+^n$, $c^\epsilon
=c_1^{\epsilon_1}\cdots c_n^{\epsilon_n}$ for $\epsilon =
(\epsilon_1,\ldots,\epsilon_n) \in \Z_2^n.$

Below we give a new proof of the PBW basis theorem for $\aHC_W$
(which has been established by different methods in \cite{Naz,
Kle}), using in effect the induced $\aHC_W$-module
$\text{Ind}_W^{\aHC_W} {\bf 1}$ from the trivial $W$-module $\bf 1$.
This induced module is of independent interest. This approach will
then be used for type $D$ and $B$.

\begin{theorem} \label{PBW:A}
Let $W = W_{A_{n-1}}$. The multiplication of subalgebras $\C[\h^*],
\Cl_n$, and $\C W$ induces a vector space isomorphism
\[
\C[\h^*]\otimes \Cl_n \otimes \C
W\stackrel{\simeq}{\longrightarrow} \aHC_W.
\]
Equivalently, $\{x^\al c^\epsilon w| \al\in\Z_+^n, \epsilon
\in\Z_2^n,  w\in W\}$ forms a linear basis for $\aHC_W$ (called a
PBW basis).
\end{theorem}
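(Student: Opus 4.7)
My plan is first to show that the multiplication map $\mu: \C[\h^*] \otimes \Cl_n \otimes \C S_n \to \aHC_W$ is surjective by using the defining relations to reorder any word in the generators into the normal form $x^\alpha c^\epsilon w$. Specifically, relation (\ref{sigmac}) moves every $\sigma \in S_n$ past a $c_i$ at the cost of permuting the index, while relations (\ref{xici})--(\ref{xjsi}), together with the formula of Proposition \ref{mulA}, let one push every $x_i$ to the left of any $s_j$ or $c_j$. Each such move produces lower-order correction terms that already lie in the desired normal form, so an induction on the total generator-length finishes the surjectivity.

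For linear independence, I would construct an $\aHC_W$-module $V$ on the vector space $\C[\h^*] \otimes \Cl_n$ as follows. Let $c_i$ act by left Clifford multiplication on the second factor, and let $x_i$ act by $f \otimes c \mapsto (x_i f) \otimes \gamma_i(c)$, where $\gamma_i \in \mathrm{Aut}(\Cl_n)$ is the involution sending $c_i \mapsto -c_i$ and fixing $c_j$ for $j \neq i$; this twist precisely encodes the anticommutation $x_i c_i = -c_i x_i$ of (\ref{xici}) while keeping the $x_i$'s mutually commuting, since $\gamma_i \gamma_j = \gamma_j \gamma_i$. For $s_i$, take the Demazure--Lusztig-type operator suggested by Proposition \ref{mulA}: the naive permutation action $f \otimes c \mapsto f^{s_i} \otimes {}^{s_i}c$ (where $s_i$ acts on $\Cl_n$ by $c_j \mapsto c_{s_i(j)}$) plus a correction built from the two divided-difference expressions $u\,\frac{f - f^{s_i}}{x_{i+1}-x_i}$ and $u\,\frac{c_i c_{i+1}f - f^{s_i}c_i c_{i+1}}{x_{i+1}+x_i}$. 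This correction vanishes on $1 \otimes 1$, so the vacuum is fixed by every $s_i$, and $V$ is recognized as the induced module $\mathrm{Ind}_{\C S_n}^{\aHC_W}\mathbf{1}$.

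The main technical obstacle will be verifying that these operators satisfy the Coxeter relations $s_i^2 = 1$, the braid relation $s_i s_{i+1} s_i = s_{i+1} s_i s_{i+1}$, and the distant commutations $s_i s_j = s_j s_i$ for $|i-j|>1$. The braid relation in particular forces a careful cancellation in which the Clifford factor $c_i c_{i+1}$ must be tracked through the permutation action, and this is the one genuinely delicate computation. The remaining relations of $\aHC_W$ (namely (\ref{polyn})--(\ref{sigmac}) and (\ref{xisi})--(\ref{xjsi})) are built into the construction of the $x_i$- and $s_i$-actions and follow essentially directly.

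Once $V$ is in hand, consider the $\aHC_W$-linear map $\pi: \aHC_W \to V$, $a \mapsto a \cdot (1 \otimes 1)$. Since $\pi(x^\alpha c^\epsilon) = x^\alpha \otimes c^\epsilon$ and these are manifestly linearly independent in $V$, the elements $\{x^\alpha c^\epsilon\}$ are linearly independent in $\aHC_W$ modulo the left ideal generated by $\{s_i - 1\}$. By Frobenius reciprocity $\pi$ factors through the induced module $M := \aHC_W \otimes_{\C S_n} \mathbf{1}$, so the spanning set $\{x^\alpha c^\epsilon \otimes 1\}$ of $M$ provided by the surjectivity step is actually a basis. Equivalently, $\aHC_W$ is free as a right $\C S_n$-module on $\{x^\alpha c^\epsilon\}$, and therefore $\{x^\alpha c^\epsilon w : \alpha \in \Z_+^n,\ \epsilon \in \Z_2^n,\ w \in S_n\}$ forms a PBW basis of $\aHC_W$.
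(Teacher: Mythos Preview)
Your construction of the module $V$ is essentially the same as the paper's induced module $\text{IND}$, and you are right that the verification of the Coxeter relations is the main labor.  The gap is in your last paragraph, where you pass from ``$\{x^\alpha c^\epsilon \otimes 1\}$ is a basis of $M = \aHC_W \otimes_{\C S_n} \mathbf{1}$'' to ``$\aHC_W$ is free as a right $\C S_n$-module on $\{x^\alpha c^\epsilon\}$''.  These are \emph{not} equivalent.  Tensoring with the trivial module is only right exact, so from the surjection $F := \bigoplus_{\alpha,\epsilon} \C S_n \twoheadrightarrow \aHC_W$ you learn that $\ker(F \to \aHC_W) \otimes_{\C S_n} \mathbf{1}$ has zero image in $F \otimes_{\C S_n} \mathbf{1}$; but a nonzero $\C S_n$-module built out of nontrivial simples already satisfies $K \otimes_{\C S_n} \mathbf{1} = 0$.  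Concretely: applying your map $\pi(a) = a\cdot(1\otimes 1)$ to $x^\alpha c^\epsilon w$ gives the same vector for every $w\in S_n$, so $\pi$ cannot detect a hypothetical relation of the form $\sum_w a_w\, x^\alpha c^\epsilon w = 0$ with $\sum_w a_w = 0$.

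The paper uses the very same module $V$ but extracts linear independence differently: instead of acting on the vacuum $1\otimes 1$, it acts on $x_1^{N_1}\cdots x_n^{N_n}$ with $N_1 \gg N_2 \gg \cdots \gg N_n \gg 0$.  Distinct $w\in S_n$ now produce distinct leading monomials $(x_1^{N_1}\cdots x_n^{N_n})^{w}$, and a lexicographic leading-term argument forces all coefficients to vanish.  To repair your argument you must either imitate this, or replace the single trivial module by enough $\C S_n$-modules to separate all $w$ (e.g.\ induce from the regular representation, which amounts to showing $V$ is faithful), or run a genuine filtered-deformation argument; the bare statement about $M$ is not enough.
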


\begin{proof}
Note that $\text{IND}:=\C [x_1,\ldots,x_n] \otimes \Cl_n$ admits an
algebra structure by (\ref{clifford}), (\ref{polyn}) and
(\ref{xici}). By the explicit defining relations of $\aHC_W$, we can
verify that the algebra $\aHC_W$ acts on $\text{IND}$ by letting
$x_i$ and $c_i$ act by left multiplication, and $s_i \in S_n$ act by
\begin{equation*}  \label{eq:induced}
s_i. (fc^\epsilon) = f^{s_i} c^{s_i\epsilon}+  \left (
u\displaystyle \frac{f - f^{s_i}}{x_{i+1} - x_i} + u\displaystyle
\frac{c_ic_{i + 1}f - f^{s_i} c_ic_{i + 1}}{x_{i+1} + x_i} \right
) c^\epsilon.
\end{equation*}

For $\alpha =(a_1,\ldots,a_n)$, we denote $|\alpha|
=a_1+\cdots+a_n$. Define a Lexicographic ordering $<$ on the
monomials $x^\alpha, \alpha \in \Z_+^n$, (or respectively on
$\Z_+^n$), by declaring $x^\al<x^{\al'}$, (or respectively
$\al<{\al'}$), if $|\al| <|\al'|$, or if $|\al| =|\al'|$ then
there exists an $1\le i \le n$ such that $a_i<a_i'$ and $a_j=a_j'$
for each $j<i$.

Note that the algebra $\aHC_W$ is spanned by the elements of the
form $x^\al c^\epsilon w$. It remains to show that these elements
are linearly independent.

Suppose that $S := \sum a_{_{\al \epsilon w}} x^\al c^\epsilon w
=0$ for a finite sum over $\al, \epsilon, w$ and that some
coefficient $a_{_{\al \epsilon w}} \neq 0$; we fix one such
$\epsilon$. Now consider the action $S$ on an element of the form
$x_1^{N_1} x_2^{N_2}\cdots x_n^{N_n}$ for $N_1 \gg N_2 \gg\cdots
\gg N_n \gg 0$. Let $\tilde{w}$ be such that $(x_1^{N_1}
x_2^{N_2}\cdots x_n^{N_n})^{\tilde{w}}$ is maximal among all
possible $w$ with $a_{\al \epsilon w} \neq 0$ for some $\al$. Let
$\tilde{\al}$ be the largest element among all $\alpha$ with
$a_{_{\al \epsilon \td{w}}} \neq 0$. Then among all monomials in
$S(x_1^{N_1} x_2^{N_2}\cdots x_n^{N_n})$, the monomial
$x^{\tilde{\al}} (x_1^{N_1} x_2^{N_2}\cdots x_n^{N_n})^{\tilde{w}}
c^\epsilon$ appears as a maximal term with coefficient $\pm
a_{{\tilde{\al} \epsilon \tilde{w}}}$. It follows from $S=0$ that
$a_{{\tilde{\al} \epsilon \tilde{w}}} =0$. This is
a contradiction, 
and hence the elements $x^\al c^\epsilon w$ are linearly
independent.
\end{proof}

\begin{remark}
By the PBW Theorem~\ref{PBW:A}, the $\aHC_W$-module $\text{IND}$
introduced in the above proof can be identified with the
$\aHC_W$-module induced from the trivial $\C W$-module. The same
remark applies below to type $D$ and $B$.
\end{remark}


\subsection{The algebra $\aHC_W$ of type $D_n$}
Let $W =W_{D_{n}}$ be the Weyl group of type $D_n$. It is
generated by $s_1,\ldots,s_n$, subject to the following relations:
\begin{align}
s_{i}^{2} &  =1\quad (i\leq n-1)   \label{eq:invol} \\
s_{i}s_{i+1}s_{i} &  =s_{i+1}s_{i}s_{i+1}\quad (i\leq n-2)  \\
s_{i}s_{j} &  =s_{j}s_{i}\quad (|i-j|>1,\; i,j\neq n)  \label{eq:comm} \\
s_{i}s_{n} &  =s_{n}s_{i}\quad (i\neq n-2) \\
s_{n-2}s_{n}s_{n-2} &  =s_{n}s_{n-2}s_{n}, \quad s_n^2=1.
\label{eq:braidD}
\end{align}
In particular, $S_n$ is generated by $s_1,\ldots,s_{n-1}$ subject
to the relations (\ref{eq:invol}--\ref{eq:comm}) above.

\begin{definition}
Let $u\in \C$, and let $W = W_{D_n}$. The degenerate affine
Hecke-Clifford algebra of type $D_{n}$, denoted by $\aHC_W$ or
$\aHC_{D_n}$, is the  algebra generated by $x_i, c_i, s_i$, $1\le
i\le n$, subject to the relations (\ref{polyn}--\ref{xjsi}),
(\ref{eq:invol}--\ref{eq:braidD}), and the following additional
relations:
\begin{align}
s_{n}c_{n} &  =-c_{n-1}s_{n} \nonumber \\
s_{n}c_{i} & =c_{i}s_{n}  \quad (i\neq n-1,n) \nonumber  \\
s_{n}x_{n} + x_{n-1}s_{n}&  = -u(1+c_{n-1}c_{n}) \label{Dsnxn} \\
s_n x_i & = x_i s_n   \quad (i\neq n-1,n).  \nonumber
\end{align}
\end{definition}


\begin{proposition}  \label{DinvolCliff}
The algebra $\aHC_{D_n}$ admits anti-involutions $\tau_1,\tau_2$
defined by
\begin{align*}
    \tau_1&: s_i\mapsto s_i, \quad\;\; c_j\mapsto c_j, \quad x_j\mapsto
    x_j,\quad (1\le i\le n);\\
    \tau_2&: s_i\mapsto s_i, \quad c_j\mapsto -c_j, \quad x_j\mapsto
    x_j,\quad (1\le i\le n).
\end{align*}
Also, the algebra $\aHC_{D_n}$ admits an involution $\sigma$ which
fixes all generators $s_i, x_i, c_i$ except the following 4
generators:
\[
\sigma: s_n\mapsto s_{n-1}, \quad s_{n-1}\mapsto s_n,
    \quad x_n\mapsto -x_{n}, \quad c_n\mapsto -c_n.
\]
\end{proposition}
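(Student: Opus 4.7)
The plan is to verify that each of $\tau_1$, $\tau_2$, and $\sigma$, specified on generators, preserves the defining relations of $\aHC_{D_n}$, namely (\ref{clifford}), (\ref{polyn})--(\ref{xjsi}), (\ref{eq:invol})--(\ref{eq:braidD}), and (\ref{Dsnxn}). Once well-definedness is established, the (anti-)involution property is automatic because each map squares to the identity on generators. Throughout, $\tau_1$ and $\tau_2$ reverse products while $\sigma$ does not.

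For $\tau_1$: the relations that are homogeneous in a single family of generators (Clifford, polynomial, Weyl) are symmetric under reversal and are preserved on the nose. The crossed relations (\ref{xici}) and (\ref{sigmac}) become their ``left-moved'' forms, which are equivalent to the originals by left- and right-multiplication. The relation (\ref{xisi}) is sent to $s_i x_{i+1} - x_i s_i = u(1+c_{i+1}c_i)$; this is exactly the identity already employed in the proof of Proposition \ref{mulA}, obtained from (\ref{xisi}) by conjugation with $s_i$ together with the Weyl action $s_i(c_{i+1}c_i) = -c_{i+1}c_i$. The relation (\ref{Dsnxn}) is handled analogously, using the relation $s_n c_n = -c_{n-1} s_n$ and its derived companion $s_n c_{n-1} = -c_n s_n$, which together yield $s_n(c_{n-1}c_n) = -c_{n-1}c_n$.

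The verification for $\tau_2$ is the same as for $\tau_1$ with the additional bookkeeping of the sign $c_j \mapsto -c_j$. The right-hand sides of (\ref{xisi}) and (\ref{Dsnxn}) are quadratic in Clifford generators and so are unchanged, while (\ref{xici}), (\ref{sigmac}), and the $s_n$--Clifford relations contain exactly one Clifford generator on each side and therefore pick up matching signs.

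For $\sigma$: the Weyl relations (\ref{eq:invol})--(\ref{eq:braidD}) are manifestly preserved because swapping $s_{n-1} \leftrightarrow s_n$ realizes the nontrivial Dynkin-diagram automorphism of $D_n$, which is also why $\sigma$ must act nontrivially on a Clifford generator to keep the actions compatible. One then checks that the Clifford--$s_n$ relations are exchanged with the Clifford--$s_{n-1}$ specializations of (\ref{sigmac}) using $\sigma(c_n)=-c_n$. The final and main check is that $\sigma$ exchanges the cross-relation (\ref{xisi}) at $i=n-1$ with a derived form of (\ref{Dsnxn}): applying $\sigma$ to $x_n s_{n-1} - s_{n-1}x_{n-1} = u(1-c_n c_{n-1})$ yields, after using $\sigma(x_n)=-x_n$ and $\sigma(c_n)=-c_n$, the identity $x_n s_n + s_n x_{n-1} = -u(1-c_{n-1}c_n)$, which is equivalent to (\ref{Dsnxn}) by conjugation with $s_n$ and the $s_n$-action on $c_{n-1}c_n$. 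The principal obstacle is simply tracking the signs produced by Clifford--Weyl conjugation and by the negation of $x_n$ and $c_n$; once these are recorded, each verification is a short direct calculation.
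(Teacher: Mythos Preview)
Your proposal is correct and follows essentially the same approach as the paper: verify that the proposed maps respect the defining relations, with the nontrivial work concentrated in the cross-relations (\ref{xisi}) at $i=n-1$ and (\ref{Dsnxn}) under $\sigma$. The only cosmetic difference is that the paper leaves $\tau_1,\tau_2$ entirely to the reader and checks $\sigma$ on (\ref{xisi}) and (\ref{Dsnxn}) by two separate direct substitutions, whereas you supply more detail on $\tau_1,\tau_2$ and verify only one of the two $\sigma$-checks explicitly, deducing the other via $s_n$-conjugation and the ``exchange'' under $\sigma^2=\mathrm{id}$; both presentations amount to the same computation.
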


\begin{proof}
We leave the easy verifications on $\tau_1, \tau_2$ to the reader.

It remains to check that $\sigma$ preserves the defining relations.
Almost all the relations are obvious except (\ref{xisi}) and
(\ref{Dsnxn}). We see that $\sigma$ preserves (\ref{xisi}) as
follows: for $i \leq n-2$,
\begin{eqnarray*}
    \sigma(x_{i+1}s_{i}-s_{i}x_{i}) &=&
    x_{i+1}s_{i}-s_{i}x_{i}\\
    &=& u(1-c_{i+1}c_{i})
   = \sigma(u(1-c_{i+1}c_{i})); \\
    \sigma(x_ns_{n-1}-s_{n-1}x_{n-1}) &=& -x_n s_n - s_n x_{n-1}\\
    &=& u(1+c_{n}c_{n-1})
    = \sigma(u(1-c_nc_{n-1})).
\end{eqnarray*}

Also, $\sigma$ preserves (\ref{Dsnxn}) since
\begin{eqnarray*}
    \sigma(s_n x_n + x_{n-1}s_n) &=& -s_{n-1}x_n + x_{n-1} s_{n-1}\\
    &=& -u(1-c_{n-1}c_{n})
    = \sigma(-u(1+c_{n-1}c_{n})).
\end{eqnarray*}
Hence, $\sigma$ is an automorphism of $\aHC_{D_n}$. Clearly
$\sigma^2 =1$.
\end{proof}

The natural action of $S_n$ on $\C[\h^*] =\C[x_1,\ldots,x_n]$ is
extended to an action of $W_{D_n}$ by letting
$$x_n^{s_n} = -x_{n-1}, \quad x_{n-1}^{s_n} = -x_{n}, \quad x_i^{s_n} =
x_{i} \quad (i \neq n-1,n).
$$

\begin{proposition}  \label{Didentity}
Let $W=W_{D_n}$,  $1\leq i\leq n-1$, and  $f \in \C
[x_1,\ldots,x_n]$. Then the following identities hold in $\aHC_W$:

\begin{enumerate}
\item $\displaystyle s_i f = f^{s_i} s_i + u\displaystyle \frac{f -
f^{s_i}}{x_{i+1} - x_i} + u\displaystyle \frac{c_ic_{i + 1}f -
f^{s_i} c_ic_{i + 1}}{x_{i+1} + x_i},$

\item $s_{n}f=f^{s_n}
s_{n} -u\displaystyle\frac{f-f^{s_n}}{x_{n}+x_{n-1}} +
u\displaystyle\frac{c_{n-1}c_{n}f-f^{s_n}c_{n-1}c_{n}}{x_{n}-x_{n-1}}$.
\end{enumerate}
\end{proposition}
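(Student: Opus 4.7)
The plan is to prove the two identities separately, exploiting the fact that identity (1) is essentially a type $A$ statement and that identity (2) can be obtained from (1) by the involution $\sigma$ of Proposition~\ref{DinvolCliff}.

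For part (1), I would observe that all the defining relations invoked in the proof of Proposition~\ref{mulA} --- namely (\ref{clifford}), (\ref{polyn})--(\ref{xjsi}) with $1\le i\le n-1$ --- hold verbatim in $\aHC_{D_n}$. The inductive argument there (first induction on the exponent $k$ for $f=x_i^k$, then on $l$ for $f=x_i^k x_{i+1}^l$) uses only these relations together with the fact that $s_i$ commutes with $x_j$ for $j\neq i,i+1$, which remains true in $\aHC_{D_n}$. So exactly the same argument establishes (1).

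For part (2), I would apply $\sigma$ to (1) specialized to $i=n-1$. Writing $\tilde f := \sigma(f)$, which is $f$ with $x_n$ replaced by $-x_n$, the key intermediate identity is
\[
\sigma(f^{s_{n-1}}) = \tilde f^{\,s_n},
\]
which one checks on generators: $s_{n-1}$ swaps $x_{n-1},x_n$, whereas $s_n$ sends $x_{n-1}\mapsto -x_n$, $x_n\mapsto -x_{n-1}$, and $\sigma$ negates precisely $x_n$, so these two compositions agree on each $x_j$. Combined with $\sigma(x_n\pm x_{n-1}) = -(x_n\mp x_{n-1})$, $\sigma(c_{n-1}c_n)=-c_{n-1}c_n$, and $\sigma(s_{n-1})=s_n$, the image under $\sigma$ of (1) (with $i=n-1$) rearranges to
\[
s_n \tilde f = \tilde f^{\,s_n} s_n - u\,\frac{\tilde f - \tilde f^{\,s_n}}{x_n+x_{n-1}} + u\,\frac{c_{n-1}c_n\tilde f - \tilde f^{\,s_n} c_{n-1}c_n}{x_n-x_{n-1}},
\]
which is (2) for $\tilde f$. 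Since $f\mapsto \tilde f$ is a bijection of $\C[x_1,\ldots,x_n]$, part (2) follows for all $f$.

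The main subtlety is bookkeeping under $\sigma$ of the noncommutative fractions $\tfrac{A}{g(x)} = g(x)^{-1}A$ occurring in (1): the denominator $x_n\pm x_{n-1}$ anticommutes with $c_{n-1}c_n$, so one cannot reorder freely. However, since $\sigma$ is an algebra automorphism, $\sigma(g^{-1}A)=\sigma(g)^{-1}\sigma(A)$, and the manipulation is unambiguous once one respects the convention. A single sanity check on $f=x_n$ (comparing the result with the defining relation (\ref{Dsnxn})) is a useful safeguard against sign errors.
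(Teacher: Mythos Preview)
Your argument is correct. Part~(1) is identical to the paper's approach, which simply invokes the type~$A$ induction verbatim. For part~(2), however, the paper proceeds differently: it states that ``Formula~(2) can be verified by a similar induction,'' meaning one reruns the inductive scheme of Proposition~\ref{mulA} directly for the generator $s_n$ and the relation~(\ref{Dsnxn}). Your route via the involution $\sigma$ of Proposition~\ref{DinvolCliff} is a genuine shortcut: it replaces a second inductive computation by a single transport-of-structure argument, and the bijection $f\mapsto\tilde f$ on $\C[x_1,\ldots,x_n]$ ensures the conclusion holds for all $f$. The paper does in fact use exactly this $\sigma$-trick later (deriving part~(2) of Theorem~\ref{intertwiner} from part~(1)), so your approach is entirely in the spirit of the paper; it simply deploys the involution one proposition earlier. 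The only care needed---which you flagged---is that the expressions $\tfrac{A}{g(x)}$ are defined via left-divisibility in $\aHC_W$ rather than as honest inverses, but since $\sigma$ is an algebra automorphism this divisibility is preserved and the manipulation is legitimate.
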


\begin{proof}
Formula (1) has been established by induction as in type $A_{n-1}$.
Formula (2) can be verified by a similar induction.
\end{proof}


\subsection{The algebra $\aHC_W$ of type $B_n$}

Let $W =W_{B_n}$ be the Weyl group of type $B_n$, which is generated
by $s_1,\ldots,s_n$, subject to the defining relation for $S_n$ on
$s_1, \ldots, s_{n-1}$ and the following additional relations:
\begin{align}
s_i s_{n} &  =s_ns_i  \quad (1\le i \leq n-2) \label{eq:sisnB} \\
(s_{n-1}s_{n})^4 &  =1, \; s_{n}^{2} =1.  \label{braidB}
\end{align}

We note that the simple reflections $s_1,\ldots,s_n$ belongs to two
different conjugacy classes in $W_{B_n}$, with $s_1,\ldots,s_{n-1}$
in one and $s_n$ in the other.

\begin{definition}
Let $u,v\in \C$, and let $W=W_{B_n}$. The degenerate affine
Hecke-Clifford algebra of type $B_{n}$, denoted by $\aHC_W$ or
$\aHC_{B_n}$, is the  algebra generated by $x_i, c_i, s_i$, $1\le
i\le n$, subject to the relations (\ref{polyn}--\ref{xjsi}),
(\ref{eq:invol}--\ref{eq:comm}), (\ref{eq:sisnB}--\ref{braidB}), and
the following additional relations:
\begin{align*}
s_{n}c_{n} &  =-c_{n}s_{n} \\
s_{n}c_{i} &=c_{i}s_{n}\quad (i\neq n)   \\
s_{n}x_{n} +x_{n}s_{n}&  = -\sqrt{2} \,v\\
s_n x_i &=x_i s_n \quad  (i\neq n).
\end{align*}
\end{definition}
The factor $\sqrt{2}$ above is inserted for the convenience later in
relation to the spin affine Hecke algebras. When it is necessary to
indicate $u,v$, we will write $\aHC_W(u,v)$ for $\aHC_W$. For any $a
\in \C\backslash \{0\}$, we have an isomorphism of superalgebras
$\psi: \aHC_W(au,av) \rightarrow \aHC_W(u,v)$ given by dilations
$x_{i}\mapsto a x_{i}$ for $1\leq i\leq n$, while fixing each $s_i,
c_i$.

The action of $S_n$ on $\C[\h^*] =\C[x_1,\ldots,x_n]$ can be
extended to an action of $W_{B_n}$ by letting
$$
x_n^{s_n} = -x_{n}, \quad x_i^{s_n} = x_{i}, \quad (i \neq n).
$$

\begin{proposition}  \label{Bidentity}
Let $W=W_{B_n}$. Given $f\in \C [x_1,\ldots,x_n]$ and $1\le i\leq
n-1$, the following identities hold in $\aHC_W$:

\begin{enumerate}
\item $s_i f = f^{s_i} s_i + u\displaystyle \frac{f -
f^{s_i}}{x_{i+1} - x_i} + u\displaystyle \frac{c_ic_{i + 1}f -
f^{s_i} c_ic_{i + 1}}{x_{i+1} + x_i},$

\item $s_{n}f=f^{s_n}s_{n} -\sqrt{2} \displaystyle
v\frac{f-f^{s_n}}{2 x_{n}}$.

\end{enumerate}
\end{proposition}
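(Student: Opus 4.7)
\medskip
\noindent \textbf{Proof proposal.} The plan is to treat the two identities in sequence. Identity~(1) is formally the same as Proposition~\ref{mulA} and the first identity of Proposition~\ref{Didentity}: the relations that enter its proof, namely (\ref{polyn})--(\ref{xjsi}) together with (\ref{eq:invol})--(\ref{eq:comm}), are all present unchanged in $\aHC_{B_n}$. So I would simply observe that the double induction on the exponents of monomials $f = x_i^k x_{i+1}^l$ used there carries over verbatim, reducing the general case to such monomials by $\C$-linearity and the fact that $s_i$ commutes with every $x_j$ for $j \neq i, i+1$.

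For identity~(2), the plan is to reduce to the one-variable case and then induct on a single exponent. Since $s_n$ commutes with each $x_j$ for $j\neq n$, any polynomial decomposes as $f = \sum_k g_k(x_1,\ldots,x_{n-1})\, x_n^k$, and each $g_k$ commutes with both $s_n$ and with the would-be denominator $x_n$. Both sides of~(2) are additive in $f$, so one can pull the $g_k$'s outside and it suffices to prove (2) for $f = x_n^k$. The base cases $k=0$ (trivial) and $k=1$ (the defining relation $s_n x_n + x_n s_n = -\sqrt{2}\, v$) are immediate. For the inductive step, I write $s_n x_n^{k+1} = (s_n x_n^k)\, x_n$, substitute the induction hypothesis, and then push $s_n$ past the final $x_n$ using the defining relation again; collecting terms and comparing with the desired formula for $x_n^{k+1}$ reduces to the elementary identity $(-1)^k + \frac{1-(-1)^k}{2} = \frac{1+(-1)^k}{2}$.

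The only mildly delicate point is that the denominator $2x_n$ on the right-hand side of~(2) must make sense under the convention $\frac{A}{g(x)} := g(x)^{-1} A$ declared after Proposition~\ref{mulA}; that is, $f - f^{s_n}$ must be left-divisible by $2x_n$. This is automatic in the reduced case $f = x_n^k$, where the numerator is $(1-(-1)^k)\, x_n^k$, which vanishes for even $k$ and equals $2x_n \cdot x_n^{k-1}$ for odd $k$, and then extends to arbitrary $f$ by the decomposition above. Apart from this bookkeeping there is no real obstacle: both formulas are structurally the same Demazure--Lusztig-type intertwining, with the Clifford correction appearing in~(1) because $s_i$ swaps two adjacent coordinates, and disappearing in~(2) because $s_n$ only flips the single coordinate $x_n$.
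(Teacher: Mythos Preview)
Your proposal is correct and follows exactly the approach the paper indicates: the paper's own proof is simply ``The proof is similar to type $A$ and $D$, and will be omitted,'' and your reduction of~(1) to Proposition~\ref{mulA} together with the single-variable induction on $x_n^k$ for~(2) is precisely what ``similar'' means here. If anything, you have supplied more detail than the paper does, and the elementary identity $(-1)^k + \tfrac{1-(-1)^k}{2} = \tfrac{1-(-1)^{k+1}}{2}$ is exactly the bookkeeping that makes the inductive step close.
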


\begin{proof}
The proof is similar to type $A$ and $D$, and will be omitted.
\end{proof}
\subsection{PBW basis for $\aHC_W$}

Note that $\aHC_W$ contains $\C[\h^*], \Cl_n, \C W$ as
subalgebras. We have the following PBW basis theorem for $\aHC_W$.

\begin{theorem}  \label{PBW:DB}
Let $W=W_{D_n}$ or $W=W_{B_n}$. The multiplication of subalgebras
$\C[\h^*], \Cl_n$, and $\C W$ induces a vector space isomorphism%
\[
\C[\h^*]\otimes \Cl_n \otimes \C W\longrightarrow\aHC_W.
\]
Equivalently, the elements $\{x^\al c^\epsilon w| \al\in\Z_+^n,
\epsilon \in\Z_2^n, w\in W\}$ form a linear basis for $\aHC_W$
(called a PBW basis).
\end{theorem}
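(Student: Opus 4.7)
The plan is to extend the induced-module argument from the proof of Theorem~\ref{PBW:A} to types $D$ and $B$. Spanning is immediate from the defining relations, which permit any word in the generators of $\aHC_W$ to be rewritten as a $\C$-linear combination of the monomials $x^\al c^\epsilon w$.

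For linear independence, I would equip $\mathrm{IND} := \C[x_1,\ldots,x_n]\otimes\Cl_n$ with the structure of an $\aHC_W$-module. The generators $x_i$ and $c_i$ act by left multiplication; the reflections $s_1,\ldots,s_{n-1}$ act by the type-$A$ formula from the proof of Theorem~\ref{PBW:A}; and $s_n$ acts by the formula of Proposition~\ref{Didentity}(2) in type $D$, respectively Proposition~\ref{Bidentity}(2) in type $B$, composed with the prescribed action on the Clifford generators ($c_n\mapsto -c_{n-1}$ and $c_{n-1}\mapsto -c_n$ in type $D$; $c_n\mapsto -c_n$ in type $B$; other $c_i$ fixed). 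The relations internal to $\C[\h^*]\otimes\Cl_n$, the type-$A$ relations among $s_1,\ldots,s_{n-1}$, and the cross relations between $s_n$ and the $x_i, c_i$ are verified by direct computation on monomials, in the spirit of the inductive proofs of Proposition~\ref{mulA} and Propositions~\ref{Didentity}--\ref{Bidentity}.

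The main obstacle is the verification of the braid relations that involve $s_n$: the length-three braid $s_{n-2}s_n s_{n-2} = s_n s_{n-2} s_n$ in type $D$, the length-four braid $(s_{n-1}s_n)^4 = 1$ in type $B$, and the commutations $s_i s_n = s_n s_i$ for non-adjacent indices. The leading (highest-degree) part on each side of every such relation is just the ordinary $W$-action on $\C[\h^*]\otimes\Cl_n$, which already satisfies the Weyl-group braid relations; it therefore remains to check that the divided-difference correction terms collapse. This reduces to a finite computation on the three-variable slice $\C[x_{n-2},x_{n-1},x_n]\otimes\Cl_n$ in type $D$, and on $\C[x_{n-1},x_n]\otimes\Cl_n$ in type $B$; both are straightforward if somewhat tedious.

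With the action of $\aHC_W$ on $\mathrm{IND}$ in hand, linear independence follows as in the proof of Theorem~\ref{PBW:A}. Assume $S=\sum a_{\al\epsilon w}\, x^\al c^\epsilon w = 0$ with some $a_{\al\epsilon w}\ne 0$, and fix such an $\epsilon$. Apply $S$ to $x_1^{N_1}\cdots x_n^{N_n}\in\mathrm{IND}$ with $N_1 \gg \cdots \gg N_n \gg 0$. Each $w\in W$ (a signed permutation in types $D$ and $B$) produces leading term $(x_1^{N_1}\cdots x_n^{N_n})^w$ plus strictly lower-degree corrections, and distinct signed permutations of a widely separated decreasing tuple yield distinct monomials up to sign. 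Selecting the lexicographically maximal $\tilde w$ for which $a_{\al\epsilon\tilde w}\ne 0$ for some $\al$, and then the maximal such $\tilde\al$, one finds the monomial $x^{\tilde\al}(x_1^{N_1}\cdots x_n^{N_n})^{\tilde w}c^\epsilon$ appearing as a maximal term of $S\cdot(x_1^{N_1}\cdots x_n^{N_n})$ with coefficient $\pm a_{\tilde\al\epsilon\tilde w}\ne 0$, contradicting $S=0$.
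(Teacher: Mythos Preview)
Your module construction and the spanning argument match the paper's approach. The gap is in the linear-independence step. In types $B$ and $D$ the Weyl group $W$ is a signed permutation group, and two distinct elements $w=((\eta_1,\dots,\eta_n),\sigma)$ and $w'=((\eta_1',\dots,\eta_n'),\sigma)$ sharing the same underlying permutation $\sigma$ act on $x_1^{N_1}\cdots x_n^{N_n}$ to produce the \emph{same} monomial, differing only by the global sign $(-1)^{\sum_i \eta_i N_i}$ versus $(-1)^{\sum_i \eta_i' N_i}$. So your assertion that ``distinct signed permutations of a widely separated decreasing tuple yield distinct monomials up to sign'' is false in the sense you need: there is no unique maximizing $\tilde w$, and the coefficient of the maximal monomial $x^{\tilde\al}(x_1^{N_1}\cdots x_n^{N_n})^{\sigma}c^\epsilon$ is a genuine sum $\sum_{\eta}\pm a_{\tilde\al\,\epsilon\,(\eta,\sigma)}$ over all sign vectors $\eta$, which could vanish without the individual coefficients vanishing.

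The paper handles exactly this point: once the permutation part $\sigma$ and the exponent $\tilde\al$ are pinned down, one obtains the relation $\sum_{\eta}a_{\tilde\al\,\epsilon\,(\eta,\sigma)}(-1)^{\sum_i \eta_i N_i}=0$, and then lets the parities of $N_1,\dots,N_n$ vary independently (still with $N_1\gg\cdots\gg N_n$) to get $2^n$ linear equations whose matrix is invertible, forcing every $a_{\tilde\al\,\epsilon\,(\eta,\sigma)}=0$. You should insert this parity-variation step; with it, your argument becomes the paper's.
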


\begin{proof}
For $W=W_{D_n}$, we can verify by a direct lengthy computation
that the $\aHC_{A_{n-1}}$-action on $\text{IND}=\C
[x_1,\ldots,x_n] \otimes \Cl_n$ (see the proof of
Theorem~\ref{PBW:A}) naturally extends to an action of
$\aHC_{D_n}$, where (compare Proposition~\ref{Didentity}) $s_n$
acts by
\[
s_{n} . (fc^\epsilon) =f^{s_n}c^{s_n\epsilon}
 - \left ( u\displaystyle\frac{f-f^{s_n}}{x_{n}+x_{n-1}}%
-u\displaystyle\frac{c_{n-1}c_{n}f -f^{s_n}
c_{n-1}c_{n}}{x_{n}-x_{n-1}} \right)
 c^\epsilon.
\]
Similarly, for $W=W_{B_n}$, the $\aHC_{A_{n-1}}$-action on
$\text{IND}$ extends to an action of $\aHC_{B_n}$, where (compare
Proposition~\ref{Bidentity}) $s_n$ acts by
\[
s_{n} . (fc^\epsilon) =f^{s_n}c^{s_n\epsilon}
 -\sqrt{2}  \displaystyle v \frac{f-f^{s_n}}{2
x_{n}} c^\epsilon.
\]

It is easy to show that, for either $W$, the elements $x^\al
c^\epsilon w$ 
span $\aHC_W$. It remains to show that they are linearly
independent. We shall treat the $W_{B_n}$ case in detail and skip
the analogous $W_{D_n}$ case.

To that end, we shall refer to the argument in the proof of
Theorem~\ref{PBW:A} with suitable modification as follows. The
$\tilde{w} =((\eta_1, \dots, \eta_n), \sigma) \in W_{B_n} =\{\pm
1\}^n \rtimes S_n$ may now not be unique, but the $\sigma$ and the
$\tilde{\al}$ are uniquely determined. Then, by the same argument
on the vanishing of a maximal term, we obtain that
$\sum_{\tilde{w}} a_{{\tilde{\al} \epsilon \tilde{w}}} x^\al
(x_1^{N_1} x_2^{N_2}\cdots x_n^{N_n})^{\tilde{w}} =0$, and hence,
$$
\sum_{(\eta_1, \dots, \eta_n)} a_{{\tilde{\al} \epsilon
\tilde{w}}} (-1)^{\sum_{i=1}^n \eta_i N_i} =0.
$$
By choosing $N_1,\ldots, N_n$ with different parities ($2^n$
choices) and solving the $2^n$ linear equations, we see that all
$a_{{\tilde{\al} \epsilon \tilde{w}}} =0$. This can also be seen
more explicitly by induction on $n$. By choosing $N_n$ to be even
and odd, we deduce that for a fixed $\eta_n$,
$\sum_{(\eta_1, \dots, \eta_{n-1}) \in \{\pm 1\}^{n-1}}
a_{{\tilde{\al} \epsilon \tilde{w}}} (-1)^{\sum_{i=1}^{n-1} \eta_i
N_i} =0$, which is the equation for $(n-1)$ $x_i$'s and the
induction applies.
%
\end{proof}
\subsection{The even center for $\aHC_W$}

The {\em even center} of a superalgebra $A$, denoted by $Z(A)$, is
the subalgebra of even central elements of $A$.

\begin{proposition}  \label{centerAff}
Let $W=W_{D_n}$ or $W=W_{B_n}$. The even center $Z(\aHC_W)$ of
$\aHC_W$ is isomorphic to $\C [x_1^2,\ldots,x_n^2]^{W}$.
\end{proposition}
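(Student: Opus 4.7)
The plan is to prove both inclusions of the equality $Z(\aHC_W) = \C[x_1^2,\ldots,x_n^2]^W$ separately, where the right side sits inside $\aHC_W$ via the natural maps $\C[x_1^2,\ldots,x_n^2]^W \hookrightarrow \C[\h^*] \hookrightarrow \aHC_W$.

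For the inclusion $\supseteq$, let $f \in \C[x_1^2,\ldots,x_n^2]^W$. Then $f$ is even, commutes with every $x_j$ trivially, and commutes with every $c_j$ because the relation $c_j x_j = -x_j c_j$ squares to $c_j x_j^2 = x_j^2 c_j$ while $c_j x_i = x_i c_j$ for $i \neq j$. Propositions~\ref{mulA}, \ref{Didentity}, and \ref{Bidentity} express $s_i f$ as $f^{s_i} s_i$ plus correction terms of the shape $u(f - f^{s_i})/g(x)$ or $u(c_* c_* f - f^{s_i} c_* c_*)/g(x)$; the first vanishes by $W$-invariance, and the second vanishes because the product $c_* c_*$ of two Clifford generators commutes with every polynomial in the $x_j^2$'s. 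Hence $f$ commutes with every $s_i$, so $f \in Z(\aHC_W)$.

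For the reverse inclusion, equip $\aHC_W$ with the $x$-filtration defined by $\deg x_j = 1$ and $\deg c_j = \deg w = 0$ for $w \in W$. By Theorem~\ref{PBW:DB} and inspection of the defining relations, the associated graded algebra $\text{gr}\,\aHC_W$ is presented by the same generators and relations as $\aHC_W$ but with the affine corrections in (\ref{xisi}), (\ref{Dsnxn}), and their type-$B$ analogue dropped. The standard leading-term argument then yields an injective algebra map $Z(\aHC_W) \hookrightarrow Z(\text{gr}\,\aHC_W)$; since $\C[x_1^2,\ldots,x_n^2]^W$ already lies in $Z(\aHC_W)$ by the preceding paragraph and maps to itself under this map, it suffices to prove the containment $Z(\text{gr}\,\aHC_W)^{\bar 0} \subseteq \C[x_1^2,\ldots,x_n^2]^W$ on the even part.

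To compute $Z(\text{gr}\,\aHC_W)^{\bar 0}$, I write an even central element as $z = \sum_{\epsilon, w} a_{\epsilon, w}\, c^\epsilon w$ with $a_{\epsilon, w} \in \C[\h^*]$ and $|\epsilon|$ even, and impose commutation with each generator in turn. Commutation with $x_j$ forces $w = w_\epsilon$, the sign change flipping coordinates where $\epsilon_j = 1$; commutation with $c_i$ combined with $|\epsilon|$ even forces $a_\epsilon \in \C[x_1^2,\ldots,x_n^2]$; commutation with $s_i$ for $i < n$ forces $a_\epsilon = 0$ unless $\epsilon_i = \epsilon_{i+1}$ for every such $i$, so only $\epsilon = 0$ and $\epsilon = (1,\ldots,1)$ survive with each corresponding $a_\epsilon$ being $S_n$-invariant; finally, commutation with $s_n$, together with the identity $s_n c^{(1,\ldots,1)} = -c^{(1,\ldots,1)} s_n$ (valid in both types $D$ and $B$) and the fact that $s_n$ acts trivially on $S_n$-invariants in $\C[x_1^2,\ldots,x_n^2]$, forces $a_{(1,\ldots,1)} = 0$ and $a_0 \in \C[x_1^2,\ldots,x_n^2]^W$. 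The main obstacle will be the meticulous sign bookkeeping throughout the $s_n$-commutation step, in particular verifying the sign flip $s_n c^{(1,\ldots,1)} = -c^{(1,\ldots,1)} s_n$ separately in types $D$ and $B$, where $s_n$ acts by a signed transposition versus a pure sign flip, before arriving at the same final sign.
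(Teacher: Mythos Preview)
Your inclusion $\C[x_1^2,\ldots,x_n^2]^W \subseteq Z(\aHC_W)$ is fine and matches the paper. For the reverse inclusion you take a genuinely different route: the paper works directly in $\aHC_W$, commuting a putative central element $C=\sum a_{\alpha,\epsilon,w}x^\alpha c^\epsilon w$ with $x_i^2$ (which, being even in the $x$'s, commutes with all of $\Cl_n$) and using the Bruhat order to force $w=1$; then commutation with $x_i$, $c_i$, $s_i$ in turn finishes. Your filtration/associated-graded strategy is a legitimate alternative and would succeed if the computation of $Z(\gr\aHC_W)^{\bar 0}$ were carried out correctly.

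However, there is a real gap in that computation. Your claim that ``commutation with $s_i$ for $i<n$ forces $a_\epsilon=0$ unless $\epsilon_i=\epsilon_{i+1}$'' is false. In $\gr\aHC_W$ one has $s_i\,a_\epsilon c^\epsilon w_\epsilon = \pm\, a_\epsilon^{s_i}\, c^{s_i\epsilon} w_{s_i\epsilon}\, s_i$, so when $\epsilon_i\neq\epsilon_{i+1}$ this does not kill $a_\epsilon$ but merely relates it to $a_{s_i\epsilon}$; and when $\epsilon_i=\epsilon_{i+1}=1$ the condition is $a_\epsilon^{s_i}=-a_\epsilon$, not $a_\epsilon^{s_i}=a_\epsilon$. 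Thus after imposing all $s_i$-commutations ($i<n$) one still has, a priori, a sum over entire $S_n$-orbits of $\epsilon$'s with nonzero (sign-twisted) coefficients --- for instance $a_{(1,1,0,\ldots,0)}=x_1^2-x_2^2$ survives all these conditions. It is only after bringing in $s_n$ and chasing the resulting sign constraints through the $S_n$-orbit relations that one can eliminate every $\epsilon\neq 0$; this is a substantially more delicate bookkeeping exercise than what you sketch (and in type $D_n$, where $s_n$ is itself a signed transposition rather than a pure sign flip, it does not reduce to a single identity like $s_n c^{(1,\ldots,1)}=-c^{(1,\ldots,1)}s_n$). The paper's direct argument sidesteps all of this by using $x_i^2$ and Bruhat order to kill the $w$-part first, before the Clifford part ever enters the picture.
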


\begin{proof}
We first show that every $W$-invariant polynomial $f$ in
$x_1^2,\ldots,x_n^2$ is central in $\aHC_W$. Indeed, $f$ commutes
with each $c_i$ by (\ref{xici}) and clearly $f$ commutes with each
$x_i$. By Proposition~\ref{Didentity} for type $D_n$ or
Proposition~\ref{Bidentity} for type $B_n$, $s_i f = f s_i$ for each
$i$. Since $\aHC_W$ is generated by $c_i, x_i$ and $s_i$ for all
$i$, $f$ is central in $\aHC_W$ and $\C [x_1^2,\ldots,x_n^2] ^W
\subseteq Z(\aHC_W)$.

On the other hand, take an even central element $C = \sum
a_{_{\al,\epsilon,w}} x^{\al} c^{\epsilon} w$ in $\aHC_W$. We
claim that $w=1$ whenever $a_{_{\al,\epsilon,w}}\neq 0$.
Otherwise, let $1 \neq w_0 \in W$ be maximal with respect to the
Bruhat ordering in $W$ such that $a_{_{\al,\epsilon,w_0}} \neq 0$.
Then $x_i^{w_0} \neq x_i$ for some $i$. By
Proposition~\ref{Didentity} for type $D_n$ or
Proposition~\ref{Bidentity} for type $B_n$, $x_i^2 C - C x_i^2$ is
equal to $\sum_{\alpha, \epsilon} a_{_{\al,\epsilon,w_0}} x^{\al}
(x_i^2 - (x_i^{w_0})^2) c^{\epsilon} w_0$ plus a linear
combination of monomials not involving $w_0$, hence nonzero. This
contradicts with the fact that $C$ is central. So we can write $C
= \sum a_{_{\al,\epsilon}} x^{\al} c^{\epsilon}$.

Since $x_i C = C x_i$ for each $i$, then (\ref{xici}) forces $C$ to
be in $\C [x_1,\ldots,x_n]$. Now by (\ref{xici}) and $c_i C = C c_i$
for each $i$ we have that $C\in \C [x_1^2,\ldots,x_n^2]$. Since $s_i
C = C s_i$ for each $i$, we then deduce from
Proposition~\ref{Didentity} for type $D_n$ or
Proposition~\ref{Bidentity} for type $B_n$ that $C \in \C
[x_1^2,\ldots,x_n^2]^W$.

This completes the proof of the proposition.
\end{proof}

\subsection{The intertwiners in $\aHC_W$}

In this subsection, we will define the intertwiners in the
degenerate affine Hecke-Clifford algebras $\aHC_W$.

The following intertwiners $\phi_i \in \aHC_W$ (with $u=1$) for
$W=W_{A_{n-1}}$ were introduced by Nazarov \cite{Naz} (also cf.
\cite{Kle}), where $1\le i \le n-1$:
\begin{eqnarray} \label{intertwinersA}
\phi_i = (x_{i+1}^2 -  x_i^2)s_i - u(x_{i+1}+x_i) - u
(x_{i+1}-x_i)c_ic_{i+1}.
\end{eqnarray}
A direct computation using (\ref{xisi}) provides another
equivalent formula for $\phi_i$:
\begin{eqnarray*}
\phi_i = s_i (x_i^2 -  x_{i+1}^2) + u(x_{i+1}+x_i) + u
(x_{i+1}-x_i)c_ic_{i+1}.
\end{eqnarray*}

We define the intertwiners $\phi_i \in \aHC_W$ for $W=W_{D_n}$
$(1\le i \le n)$ by the same formula (\ref{intertwinersA}) for
$1\le i \le n-1$ and in addition by letting
\begin{align} \label{intertwinersD}
\phi_n \equiv \phi_n^D = (x_n^2-x_{n-1}^2)s_n + u(x_n - x_{n-1}) -
u(x_n + x_{n-1})c_{n-1}c_n.
\end{align}

We define the intertwiners $\phi_i \in \aHC_W$ for $W=W_{B_n}$
$(1\le i \le n)$ by the same formula (\ref{intertwinersA}) for
$1\le i \le n-1$ and in addition by letting
\begin{align} \label{intertwinersB}
\phi_n \equiv \phi_n^B = 2x_n^2s_n + \sqrt{2}v x_n.
\end{align}

The following generalizes the type $A_{n-1}$ results of Nazarov
\cite{Naz}.
\begin{theorem} \label{intertwiner}
Let W be either $W_{A_{n-1}}$, $W_{D_n}$, or $W_{B_n}$. The
intertwiners $\phi_i$ (with $1\le i \le n-1$ for type $A_{n-1}$
and $1\le i \le n$ for the other two types) satisfy the following
properties:
\begin{enumerate}
\item $\phi_i^2 = 2u^2(x_{i+1}^2 + x_i^2) - (x_{i+1}^2 - x_i^2)^2
\quad (1 \leq i \leq n-1, \forall W);$ \label{intert1}

\item $\phi_n^2 = 2u^2(x_{n}^2 + x_{n-1}^2) - (x_{n}^2 -
x_{n-1}^2)^2$, for type $D_n;$ \label{intert2}

\item $\phi_n^2 = 4x_n^4 - 2v^2x_n^2$, for type $B_n$;
\label{intert3} \label{interbraided}

\item $\phi_i f = f^{s_i} \phi_i \quad (\forall f\in
\C[x_1,\ldots,x_n], \forall i, \forall W);$ \label{intert:poly}

\item $\phi_i c_j  = c_j^{s_i} \phi_i \quad (1\le j\le n, \forall
i, \forall W);$

\item $\underbrace{\phi_i \phi_j \phi_i \cdots}_{m_{ij}}=
\underbrace{\phi_j\phi_i \phi_j \cdots}_{m_{ij}}$.
\end{enumerate}
\end{theorem}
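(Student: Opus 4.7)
The plan is to verify the six properties in the listed order, using direct computation together with the $s_i$-commutation identities of Propositions~\ref{mulA}, \ref{Didentity}, and \ref{Bidentity}.

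First I would establish (4). Substituting the formula for $s_i f$ (respectively $s_n f$) from the relevant proposition into $\phi_i f$ yields three terms with denominators $(x_{i+1}-x_i)$ and $(x_{i+1}+x_i)$. The factor $(x_{i+1}^2 - x_i^2) = (x_{i+1}-x_i)(x_{i+1}+x_i)$ in the leading coefficient of $\phi_i$ is chosen precisely to clear these denominators; once cleared, the resulting polynomial contributions cancel exactly against $-u(x_{i+1}+x_i)f$ and $-u(x_{i+1}-x_i)c_ic_{i+1}f$, leaving $f^{s_i}\phi_i$. The analogous check for $\phi_n^D$ uses Proposition~\ref{Didentity}(2) (with the sign twist coming from $x_n^{s_n} = -x_{n-1}$), and for $\phi_n^B$ uses Proposition~\ref{Bidentity}(2), where the single denominator $2x_n$ is cleared by $2x_n^2$. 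Property (5) reduces to three cases: for $j \neq i,i+1$ (respectively $j\neq n-1,n$ and $j\neq n$ in types $D,B$) the generator $c_j$ commutes with every ingredient of $\phi_i$; the remaining cases follow from $s_i c_i = c_{i+1} s_i$, the anti-commutation $x_j c_j = -c_j x_j$, and the Clifford identity $c_i c_{i+1} c_i = -c_{i+1}$.

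Next I would attack (1)--(3). Expanding $\phi_i^2$ by writing $\phi_i = A\,s_i + B + C\,c_ic_{i+1}$ with $A = x_{i+1}^2-x_i^2$, $B = -u(x_{i+1}+x_i)$, $C = -u(x_{i+1}-x_i)$ produces nine terms; applying (4) to move $A$, $B$, $C$ past $s_i$ on the right, using $s_i^2 = 1$, $(c_ic_{i+1})^2 = -1$, and the sign rules $c_ic_{i+1} x_i = -x_i c_i c_{i+1}$, the mixed terms pair up and collapse to the stated polynomial $2u^2(x_{i+1}^2+x_i^2)-(x_{i+1}^2-x_i^2)^2$. For (2), the identical pattern works with $(n,n-1)$ in place of $(i+1,i)$ and uses the $D$-type relation (\ref{Dsnxn}). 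For (3), the computation simplifies since $\phi_n^B$ has only two terms; expanding $(2x_n^2 s_n + \sqrt{2}v x_n)^2$ and applying $s_n x_n + x_n s_n = -\sqrt{2}v$ together with $s_n^2 = 1$ gives $4x_n^4 - 2v^2 x_n^2$ directly.

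Finally, for the braid relations (6), I would split by $m_{ij}$. The case $m_{ij}=2$ is almost automatic: the polynomial and Clifford variables involved in $\phi_i$ and $\phi_j$ are either disjoint or mutually commute/anti-commute in pairs, so $\phi_i\phi_j = \phi_j\phi_i$ reduces to $s_i s_j = s_j s_i$. The case $m_{ij}=3$ would be verified by a direct but lengthy expansion in the PBW basis of Theorem~\ref{PBW:DB}; alternatively, one observes that by (4) and (5) the difference $\phi_i\phi_j\phi_i - \phi_j\phi_i\phi_j$ lies in the centralizer of $\C[\h^*] \otimes \Cl_n$ inside $\aHC_W$ and, modulo the ideal generated by the Weyl braid relation $s_is_js_i = s_js_is_j$, has no room to be nonzero; explicit comparison of top-degree $s_is_js_i$-coefficients then pins the difference to $0$. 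The $m_{ij}=4$ case in type $B_n$ (the pair $\phi_{n-1},\phi_n^B$) is the principal obstacle, and I expect this to be the hard part: $\phi_{n-1}$ is cubic-looking (leading term quadratic in $x_{n-1},x_n$ times $s_{n-1}$) while $\phi_n^B$ is quadratic in $x_n$, so expanding $(\phi_{n-1}\phi_n^B)^2$ and $(\phi_n^B\phi_{n-1})^2$ in the PBW basis produces many monomials whose coefficients couple $u$ and $v$. The presence of the normalizing factor $\sqrt{2}$ in the defining relation $s_n x_n + x_n s_n = -\sqrt{2}v$ is precisely what makes these coefficients match, and the final identification uses $(s_{n-1}s_n)^4 = 1$ together with the Clifford anti-commutation between $c_{n-1},c_n$ and $x_{n-1},x_n$.
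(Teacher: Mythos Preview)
Your treatment of (1)--(5) matches the paper's approach: direct computation, with (4) obtained by clearing denominators in Propositions~\ref{mulA}, \ref{Didentity}, \ref{Bidentity}. (The paper shortcuts (2) by applying the involution $\sigma$ of Proposition~\ref{DinvolCliff} to (1) rather than recomputing.)

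For (6) the paper takes a genuinely different, uniform route that avoids the case-by-case expansions you propose. It first proves a separate lemma (Lemma~\ref{lem:u=zero}) checking the braid identity at the specialization $u=0$ (resp.\ $v=0$), where each $\phi_i^0$ is just $(x_{i+1}^2-x_i^2)s_i$; this shows the leading $w$-coefficients of the two sides agree. Then, writing the difference $\Delta$ as $\sum_{u<w} r_{u,w}\phi_u$ in a suitable localization of the subalgebra generated by the $x_i$ and $c_i$ (inverting $\prod_k x_k^2\prod_{i<j}(x_i^2-x_j^2)$), property (4) gives $\Delta p = p^{w}\Delta$ for every polynomial $p$; comparing $\phi_u$-coefficients yields $(p^w-p^u)r_{u,w}=0$ for all $p$, hence $r_{u,w}=0$. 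This handles $m_{ij}=2,3,4$ in one stroke and bypasses the heavy $m_{ij}=4$ expansion you flag as the hard part. Your direct-computation plan would also succeed, but the paper's argument is what makes (6) short.

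One correction to your alternative sketch for $m_{ij}=3$: the difference $\Delta=\phi_i\phi_j\phi_i-\phi_j\phi_i\phi_j$ does \emph{not} lie in the centralizer of $\C[\h^*]\otimes\Cl_n$. By (4)--(5) it satisfies $\Delta\cdot a = a^{w}\cdot\Delta$ with $w=s_is_js_i\neq 1$, a twisted commutation. This twisted relation is precisely what the paper exploits, but concluding $\Delta=0$ from it requires the extra step of passing to the $\phi_u$-expansion over a localized ring, not a bare centralizer argument.
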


\begin{proof}
Part (1) follows by a straightforward computation and can also be
found in \cite{Naz} (with $u=1$). Part (2) follows from (1) by
applying the involution $\sigma$ defined in
Proposition~\ref{DinvolCliff}. Part (3) and (5) follow by a direct
verification.

Part (4) for $W_{A_{n-1}}$ follows from clearing the denominators
in the formula in Proposition~\ref{mulA} and then rewriting in
terms of $\phi_i$ as defined in (\ref{intertwinersA}). Similarly,
(4) for $W_{D_n}$ and $W_{B_n}$ follows by rewriting the formulas
given in Proposition~\ref{Didentity} in type $D$ and
Proposition~\ref{Bidentity} in type $B$, respectively.

It remains to prove (6) which is less trivial. Recall that
$$
\overbrace{s_i s_j s_i \cdots}^{m_{ij}} = \overbrace{s_j s_i s_j
\cdots}^{m_{ij}},
$$
(denoting this element by $w$). Let $\text{IND}$ be the subalgebra
of
    $\aHC_W$ generated by $\C[x_1,\ldots,x_n]$ and $\Cl_n$. Denote by $\leq$ the Bruhat
    ordering on $W$. Then we can write
    \[
        {\phi_i \phi_j \phi_i \cdots} = f w + \sum\limits_{u< w} p_{u,w} u
    \]
    for some $f \in \C[x_1,\ldots,x_n]$, and $p_{u,w} \in
    \text{IND}$. We may rewrite
    \[
        {\phi_i \phi_j \phi_i \cdots} = f w + \sum\limits_{u< w} r_{u,w}' \phi_{u}
    \]
where $\phi_u :=\phi_a\phi_b \cdots$ for any subword
$u=s_as_b\cdots$ of $w =s_i s_j s_i \cdots$, and $r_{u,w}'$ is in
some suitable localization of $\text{IND}$ with the central
element $\prod_{1\le k \le n} x_k^2 \prod_{1\le i< j\le n} (x_i^2
-x_j^2)\in \text{IND}$ being invertible. Note that such a
localization is a free module over the corresponding localized
ring of $\C[x_1, \ldots, x_n]$. We can then write
    \[
       {\phi_j\phi_i \phi_j \cdots} = f w + \sum\limits_{u< w} r_{u,w}'' \phi_{u}
    \]
with the same coefficient of $w$ as for ${\phi_i \phi_j \phi_i
\cdots}$, according to Lemma~\ref{lem:u=zero}. The difference
$\Delta := ({\phi_i \phi_j \phi_i \cdots} - {\phi_j\phi_i \phi_j
\cdots})$ is of the form
$$\Delta =\sum_{u< w} r_{u,w} \phi_{u}$$
for some $r_{u,w}$. Observe by (4) that $\Delta p = p^w \Delta$
for any $p\in \C[x_1,\ldots,x_n]$. Then we have
    \[
      \sum_{u< w}p^w r_{u,w} \phi_{u}  = p^w \Delta= \Delta p
      = \sum_{u< w} r_{u,w} \phi_{u} p
      = \sum_{u< w} r_{u,w} p^u \phi_{u}.
   \]
In other words, $(p^w -p^u) r_{u,w} =0$ for all $p\in
\C[x_1,\ldots,x_n]$ for each given $u<w$. This implies that
$r_{u,w}=0$ for each $u$, and $\Delta = 0$. This completes the
proof of (6) modulo Lemma~\ref{lem:u=zero} below.
\end{proof}

\begin{lemma} \label{lem:u=zero}
The following identity holds:
$$\underbrace{\phi^0_i \phi^0_j \phi^0_i \cdots}_{m_{ij}}=
\underbrace{\phi^0_j\phi^0_i \phi^0_j \cdots}_{m_{ij}}$$
where $\phi^0_i$ denotes the specialization $\phi_i|_{u=0}$ of
$\phi$ at $u=0$ (or rather $\phi_n^B|_{v=0}$ when $i=n$ in the
type $B_n$ case.)
\end{lemma}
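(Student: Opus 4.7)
The plan is to exploit the dramatic simplification at $u=0$ (and $v=0$ in type $B$): relations (\ref{xisi}), (\ref{Dsnxn}), and the $B$-type analog all collapse to $s_i x_j = x_j^{s_i} s_i$ with no divided-difference or Clifford correction terms. Consequently the subalgebra of $\aHC_W|_{u=v=0}$ generated by the $x_k$'s and by $W$ is simply the smash product $\C[x_1,\ldots,x_n] \rtimes \C W$, and each $\phi^0_i$ has the clean form $p_i s_i$ with $p_i \in \C[x_1^2,\ldots,x_n^2]$: namely $p_i = x_{i+1}^2 - x_i^2$ for $1 \le i \le n-1$, $p_n = x_n^2 - x_{n-1}^2$ in type $D$, and $p_n = 2x_n^2$ in type $B$. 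In particular, no Clifford element $c_k$ ever appears in any product of the $\phi^0_i$'s.

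Using $s_i f = f^{s_i} s_i$ repeatedly to push all $s$'s to the right, I would then write
\[
\underbrace{\phi^0_i \phi^0_j \phi^0_i \cdots}_{m_{ij}} \;=\; P_{ij} \cdot \underbrace{s_i s_j s_i \cdots}_{m_{ij}}, \qquad \underbrace{\phi^0_j \phi^0_i \phi^0_j \cdots}_{m_{ij}} \;=\; P_{ji} \cdot \underbrace{s_j s_i s_j \cdots}_{m_{ij}},
\]
where $P_{ij} = p_i \cdot (s_i \cdot p_j) \cdot (s_i s_j \cdot p_i) \cdots$ is an $m_{ij}$-fold product of $W$-translates of the $p_k$'s, and $P_{ji}$ is obtained by swapping the roles of $i$ and $j$. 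The braid relation in $W$ identifies the two strings of simple reflections on the right, so the lemma reduces to the polynomial identity $P_{ij}=P_{ji}$ in $\C[x_1^2,\ldots,x_n^2]$.

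The remaining work is a short case analysis on $m_{ij}\in\{2,3,4\}$. For $m_{ij}=2$, disjointness of the supports handles all cases except the type $D$ pair $(s_{n-1},s_n)$, where $s_{n-1}$ and $s_n$ act identically on $y_k := x_k^2$; a one-line check there gives $P_{ij}=P_{ji}=-(y_n-y_{n-1})^2$. For $m_{ij}=3$, a three-term computation yields, for each type $A$ or $D$ pair,
\[
P_{ij} \;=\; (y_{b}-y_{a})(y_{c}-y_{a})(y_{c}-y_{b}) \;=\; P_{ji}
\]
with $(a,b,c)$ the appropriate indices (e.g.\ $(i,i+1,i+2)$ in type $A$, and $(n-2,n-1,n)$ for the $D$-pair $(s_{n-2},s_n)$). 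For $m_{ij}=4$, the only case is $(s_{n-1},s_n)$ in type $B$, and both four-term products evaluate to $-4\, y_{n-1} y_n (y_n-y_{n-1})^2$.

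The key conceptual point, rather than any single hard step, is the observation that each $p_i$ lies in the $W$-stable subring $\C[x_1^2,\ldots,x_n^2]$, so all $W$-translates $u_k\!\cdot p_{a_k}$ remain polynomials in the $y_k$'s; once this is recognized, the problem becomes a purely polynomial identity in a standard skew group algebra, and the verifications above are routine.
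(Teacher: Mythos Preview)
Your proof is correct and takes essentially the same approach as the paper: both write $\phi^0_i = p_i s_i$ with $p_i\in\C[x_1^2,\ldots,x_n^2]$, push the simple reflections to the right using the clean commutation $s_i f = f^{s_i} s_i$ available at $u=v=0$, and then verify the resulting polynomial identity $P_{ij}=P_{ji}$ case by case in $m_{ij}\in\{2,3,4\}$. Your framing via the smash product and the explicit handling of the type~$D$ pair $(s_{n-1},s_n)$ is slightly more systematic, but the computations themselves coincide with those in the paper.
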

\begin{proof}
Let $W=W_{B_n}$. For $1 \leq i \leq n-1$, $m_{i, i+1} =3$. So we
have
    \begin{eqnarray*}
        \phi^0_i \phi^0_{i+1} \phi^0_i
        &=& (x_{i+1}^2 -  x_i^2)s_i (x_{i+2}^2 - x_{i+1}^2)s_{i+1}(x_{i+1}^2 -
        x_i^2)s_i\\
        &=& (x_{i+1}^2 -  x_i^2)(x_{i+2}^2 -  x_{i}^2)(x_{i+2}^2 -
        x_{i+1}^2)s_i s_{i+1} s_i\\
        &=& (x_{i+2}^2 - x_{i+1}^2)(x_{i+2}^2 -  x_{i}^2)(x_{i+1}^2 -
        x_i^2)s_{i+1}s_i s_{i+1}\\
        &=& (x_{i+2}^2 -  x_{i+1}^2)s_{i+1}(x_{i+1}^2 - x_i^2)s_i(x_{i+2}^2 -
        x_{i+1}^2)s_{i+1}\\
        &=& \phi^0_{i+1} \phi^0_i\phi^0_{i+1}.
    \end{eqnarray*}

Note that $m_{ij} = 2$ for $j \neq i, i+1$; clearly, in this case,
$\phi^0_i \phi^0_j = \phi^0_j \phi^0_i.$

Noting that $m_{n-1, n}=4$, we have
    \begin{eqnarray*}
        \phi^0_{n-1} \phi^0_{n} \phi^0_{n-1} \phi^0_n
        &=& 4(x_{n}^2 -  x_{n-1}^2)s_{n-1} x_n^2s_n(x_{n}^2 - x_{n-1}^2)s_{n-1} x_n^2s_n
        \\
        &=& 4(x_{n}^2 -  x_{n-1}^2) x_{n-1}^2(x_{n-1}^2 -
        x_{n}^2)x_{n}^2 s_{n-1}s_ns_{n-1}s_n\\
        &=& 4x_{n}^2 (x_{n}^2 -  x_{n-1}^2) x_{n-1}^2(x_{n-1}^2 -
        x_{n}^2) s_ns_{n-1}s_ns_{n-1}\\
        &=& 4x_{n}^2 s_n (x_{n}^2 -  x_{n-1}^2)s_{n-1} x_{n}^2s_n(x_{n}^2 -
        x_{n-1}^2)s_{n-1}\\
        &=& \phi^0_{n} \phi^0_{n-1}\phi^0_{n} \phi^0_{n-1}.
    \end{eqnarray*}
    This completes the proof for type $B_n$.

    The similar proofs for types $A_{n-1}$ and $D_n$ are skipped.
\end{proof}

Theorem~\ref{intertwiner} implies that for every $w \in W$ we have
a well-defined element $\phi_w \in \aHC_{W}$ given by
$\phi_w = \phi_{i_1}\cdots\phi_{i_m}$ where $w = s_{i_1}\cdots
s_{i_m}$ is any reduced expression for $w$. These elements
$\phi_w$ should play an important role for the representation
theory of the algebras $\aHC_W$. It will be very interesting to
classify the simple modules of $\aHC_W$ and to find a possible
geometric realization. This was carried out by Lusztig \cite{Lu1,
Lu2, Lu3} for the usual degenerate affine Hecke algebra case.

\section{Degenerate spin affine Hecke algebras}
\label{sec:spin}

In this section we will introduce the degenerate spin affine Hecke
algebra $\saH_W$ when $W$ is the Weyl group of types $D_n$ or
$B_n$, and then establish the connections with the corresponding
degenerate affine Hecke-Clifford algebras $\aHC_W$. See \cite{W1}
for the type $A$ case.

\subsection{The skew-polynomial algebra}
We shall denote by $\Cl[b_1,\ldots,b_n]$ the $\C$-algebra generated
by $b_1,\ldots,b_n$ subject to the relations
$$
b_ib_j+b_jb_i =0\quad (i\neq j).
$$
This is naturally a superalgebra by letting each $b_i$ be odd. We
will refer to this as the {\em skew-polynomial algebra} in $n$
variables. This algebra has a linear basis given by $b^\alpha
:=b_1^{k_1}\cdots b_n^{k_n}$ for $\alpha =(k_1,\ldots,k_n) \in
\Z_+^n$, and it contains a polynomial subalgebra $\C[b_1^2,\ldots,
b_n^2]$.
\subsection{The algebra $\saH_W$ of type $D_n$}

Recall that the spin Weyl group $\C W^-$ associated to a Weyl
group $W$ is generated by $t_1,\ldots,t_n$ subject to the
relations as specified in Example~\ref{present}.

\begin{definition} \label{def:saHD}
Let $u \in \C$ and let $W=W_{D_n}$. The degenerate spin affine
Hecke algebra of type $D_n$, denoted by $\saH_W$ or $\saH_{D_n}$,
is the  algebra  generated by $\Cl[b_1,\ldots,b_n]$ and $\C W^-$
subject to the following relations:
\begin{align*}
t_{i}b_{i} + b_{i+1}t_i &  =u \quad (1\leq  i\leq n-1) \\
t_ib_j &=-b_{j}t_i \quad (j\neq i,i+1, \; 1\le i\leq n-1) \\
t_nb_n + b_{n-1}t_n & = u\\
t_nb_i &=-b_it_n \quad (i \neq n-1,n).
\end{align*}
\end{definition}

The algebra $\saH_W$ is naturally a superalgebra by letting each
$t_i$ and $b_i$ be odd generators. It contains the type $A_{n-1}$
degenerate spin affine Hecke algebra $\saH_{A_{n-1}}$ (generated
by $b_1,\ldots,b_n, t_1, \ldots, t_{n-1}$) as a subalgebra.

\begin{proposition}
The algebra $\saH_{D_n}$ admits anti-involutions $\tau_1, \tau_2$
defined by
\begin{align*}
\tau_1& : t_i\mapsto -t_i,  \quad b_i\mapsto -b_i \quad (1\le i
\le n); \\
\tau_2& : t_i\mapsto t_i,  \quad b_i\mapsto b_i \quad (1\le i \le
n).
\end{align*}
Also, the algebra $\saH_{D_n}$ admits an involution $\sigma$ which
swaps $t_{n-1}$ and $t_n$ while fixing all the remaining
generators $t_i, b_i$.
\end{proposition}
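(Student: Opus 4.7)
The strategy is the standard one: verify, for each of the three maps, that it respects every defining relation of $\saH_{D_n}$, so that it extends to a well-defined (anti-)endomorphism; the involutive property then follows from the values on generators. The defining relations split into (i) the spin Weyl relations of Example~\ref{present} for $\C W^-_{D_n}$, (ii) the skew-polynomial relations $b_ib_j+b_jb_i=0$, and (iii) the four families of cross relations in Definition~\ref{def:saHD}.

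The key preliminary step is to derive the partner relations
\begin{align*}
b_i t_i + t_i b_{i+1} &= u \quad (1 \le i \le n-1), \\
t_n b_{n-1} + b_n t_n &= u,
\end{align*}
as consequences of Definition~\ref{def:saHD}. For example, multiplying $t_ib_i + b_{i+1}t_i = u$ first on the right and then on the left by $t_i$, and using $t_i^2=1$ together with the centrality of $u$, produces the first identity; the second is analogous.

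With these partners in hand, $\tau_1$ and $\tau_2$ become routine. For $\tau_1$, one computes $\tau_1(t_ib_i + b_{i+1}t_i) = (-b_i)(-t_i) + (-t_i)(-b_{i+1}) = b_it_i + t_ib_{i+1} = u$, and the remaining cross relations and the case of $\tau_2$ are checked identically. The skew-polynomial relations are obviously preserved, and each spin Weyl relation $(t_it_j)^{m_{ij}}=\pm 1$ survives because $\tau_k$ sends it to $(\pm t_jt_i)^{m_{ij}} = ((t_it_j)^{m_{ij}})^{\pm 1} = \pm 1$.

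For $\sigma$, the main point is the $\Z_2$-symmetry of the Coxeter diagram of $D_n$ exchanging the end-nodes $n-1$ and $n$; this symmetry keeps the spin Weyl relations stable (e.g. the braid relation $t_{n-2}t_{n-1}t_{n-2}=t_{n-1}t_{n-2}t_{n-1}$ is mapped to $t_{n-2}t_nt_{n-2}=t_nt_{n-2}t_n$, which is itself a defining relation). For the cross relations, $\sigma$ sends the $i=n-1$ case of $t_ib_i + b_{i+1}t_i = u$ to $t_nb_{n-1} + b_nt_n = u$, and sends $t_nb_n + b_{n-1}t_n = u$ to $t_{n-1}b_n + b_{n-1}t_{n-1} = u$; both images are exactly the partner relations above. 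The remaining cross relations involve only indices fixed by $\sigma$. The main (minor) obstacle throughout is isolating and verifying these partner relations; once they are in place, the rest is bookkeeping.
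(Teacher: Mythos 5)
Your proof is correct. The one genuinely necessary observation is the one you isolate: the ``partner relations'' $b_it_i+t_ib_{i+1}=u$ and $b_nt_n+t_nb_{n-1}=u$, obtained by conjugating the defining relations with $t_i$ (resp.\ $t_n$) and using $t_i^2=1$; these are exactly the reversed relations needed for $\tau_1,\tau_2$ and exactly the images of the cross relations under the diagram flip needed for $\sigma$. The paper, by contrast, disposes of the proposition in one line by transporting the anti-involutions and the involution of Proposition~\ref{DinvolCliff} on $\aHC_{D_n}$ through the isomorphism $\Phi:\aHC_{D_n}\to\Cl_n\otimes\saH_{D_n}$ of Theorem~\ref{th:isomDB} (while also remarking that a direct computation works). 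Your route is self-contained: it does not forward-reference Theorem~\ref{th:isomDB}, and it avoids the small point the paper leaves implicit, namely that the transported maps on $\Cl_n\otimes\saH_{D_n}$ actually preserve the tensor factor $\saH_{D_n}$ (e.g.\ $\tau_1$ on $\aHC_{D_n}$ fixes $c_j$ while your $\tau_1$ negates $b_i$, so the match under $\Phi$ requires a short computation of its own). The paper's route buys brevity and a uniform explanation of why the same symbols $\tau_1,\tau_2,\sigma$ are used on both algebras. One cosmetic point: your line $(\pm t_jt_i)^{m_{ij}}=((t_it_j)^{m_{ij}})^{\pm1}=\pm1$ is loose as written; the clean statement is that $(t_jt_i)^{m_{ij}}=t_j\,(t_it_j)^{m_{ij}}\,t_j^{-1}=(-1)^{m_{ij}+1}$ because $(t_it_j)^{m_{ij}}$ is a scalar, which is all you need.
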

\begin{proof}
Note that we use the same symbols $\tau_1, \tau_2, \sigma$ to
denote the (anti-) involutions for $\saH_{D_n}$ and $\aHC_{D_n}$
in Proposition~\ref{DinvolCliff}, as those on $\saH_{D_n}$ are the
restrictions from those on $\aHC_{D_n}$ via the isomorphism in
Theorem~\ref{th:isomDB} below. The proposition is thus established
via the isomorphism in Theorem~\ref{th:isomDB}, or follows by a
direct computation as in the proof of
Proposition~\ref{DinvolCliff}.
\end{proof}

\subsection{The algebra $\saH_W$ of type $B_n$}

\begin{definition} Let $u,v\in\C$, and $W=W_{B_n}$.
The degenerate spin affine Hecke algebra of type $B_n$, denoted by
$\saH_W$ or $\saH_{B_n}$, is the  algebra  generated by
$\Cl[b_1,\ldots,b_n]$ and $\C W^-$ subject to the following
relations:
\begin{align*}
t_{i}b_{i} + b_{i+1}t_i &  =u \quad (1\leq  i\leq n-1) \\
t_ib_j &=-b_{j}t_i \quad (j\neq i,i+1, \; 1\le i\leq n-1) \\
t_nb_n + b_{n}t_n & =v\\
t_nb_i &=-b_{i}t_n \quad (i\neq n).
\end{align*}
\end{definition}

Sometimes, we will write $\saH_W(u,v)$ or $\saH_{B_n}(u,v)$ for
$\saH_W$ or  $\saH_{B_n}$ to indicate the dependence on the
parameters $u,v$.

\subsection{A superalgebra isomorphism}
\begin{theorem}  \label{th:isomDB}
Let $W=W_{D_n}$ or $W=W_{B_n}$. Then,
\begin{enumerate}
\item there exists an isomorphism of superalgebras
$$\Phi:\aHC_W {\longrightarrow }\Cl_n \otimes \saH_W$$
which extends the isomorphism $\Phi: \Cl_n \rtimes \C W
\longrightarrow \Cl_n \otimes \C W^-$ (in
Theorem~\ref{th:isofinite}) and sends
$x_{i}\longmapsto\displaystyle \sqrt{-2}c_{i}b_{i}$ for each $i;$
%

\item the inverse $\Psi:\mathcal{C}_{n}\otimes
\saH_W{\longrightarrow}\aHC_W$ extends $\Psi: \Cl_n \otimes \C W^-
\longrightarrow \Cl_n \rtimes \C W$ (in
Theorem~\ref{th:isofinite}) and sends $b_{i}\longmapsto
\displaystyle\frac{1}{\sqrt{-2}}c_{i}x_{i}$ for each $i$.
%
\end{enumerate}
\end{theorem}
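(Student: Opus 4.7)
The plan is to extend the finite-dimensional isomorphism of Theorem~\ref{th:isofinite} to the affine setting by defining $\Phi$ and $\Psi$ on the additional generators and checking they respect the defining relations on both sides. Concretely, extend $\Phi$ by $x_i \mapsto \sqrt{-2}\, c_i b_i$ and extend $\Psi$ by $b_i \mapsto \frac{1}{\sqrt{-2}}\, c_i x_i$. A preliminary remark: in $\Cl_n \otimes \saH_W$, the super tensor product convention together with the fact that $c_i, b_j, t_j$ are all odd forces $b_j c_i = -c_i b_j$ and $t_j c_i = -c_i t_j$ for all $i,j$. These sign rules will drive every subsequent calculation.

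Once $\Phi$ and $\Psi$ are defined, the remaining work is to verify the defining relations. The relations inside $\Cl_n \rtimes \C W$ versus $\Cl_n \otimes \C W^-$ are already covered by Theorem~\ref{th:isofinite}. The relations $x_i x_j = x_j x_i$ reduce via the sign rules to checking $c_i b_i c_j b_j = c_j b_j c_i b_i$, which telescopes to $2 c_i c_j b_i b_j$ on either order. The relations $x_i c_i = -c_i x_i$ and $x_i c_j = c_j x_i$ ($i \ne j$) follow by a single push using $b_i c_j = -c_j b_i$ and $c_i^2 = 1$. The crucial check is the mixed relation $x_{i+1} s_i - s_i x_i = u(1-c_{i+1}c_i)$ and its type $D$/$B$ analogs. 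For type $A$, using $\Phi(s_i) = -\sqrt{-1}\,\beta_i t_i$ with $\beta_i = (c_i-c_{i+1})/\sqrt{2}$ and the scalar identity $\sqrt{-2}\cdot(-\sqrt{-1})/\sqrt{2} = 1$, a direct computation gives
\[
\Phi(x_{i+1})\Phi(s_i) - \Phi(s_i)\Phi(x_i) \;=\; (1 + c_i c_{i+1})\bigl(b_{i+1} t_i + t_i b_i\bigr) \;=\; u(1 - c_{i+1}c_i),
\]
where the last step invokes the spin defining relation $t_i b_i + b_{i+1} t_i = u$ in $\saH_W$. For the extra relation (\ref{Dsnxn}) in type $D$, $\beta_n = (c_{n-1}+c_n)/\sqrt{2}$ and the parallel computation collapses to $-(1+c_{n-1}c_n)(t_n b_n + b_{n-1} t_n) = -u(1+c_{n-1}c_n)$. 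For the extra relation in type $B$, $\beta_n = c_n$ yields $\Phi(s_n)\Phi(x_n) + \Phi(x_n)\Phi(s_n) = -\sqrt{2}(t_n b_n + b_n t_n) = -\sqrt{2}\,v$.

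By mirror-image computations, one verifies that $\Psi$ preserves the defining relations of $\Cl_n \otimes \saH_W$: the skew-polynomial relations $b_i b_j + b_j b_i = 0$ for $i \neq j$ follow from $x_i x_j = x_j x_i$ combined with $c_i c_j + c_j c_i = 0$, and the spin relations such as $t_i b_i + b_{i+1} t_i = u$ simply reverse the calculation above. It remains to confirm $\Psi \circ \Phi = \mathrm{id}$ and $\Phi \circ \Psi = \mathrm{id}$ on the extra generators: for instance
\[
\Psi(\Phi(x_i)) = \Psi(\sqrt{-2}\,c_i b_i) = \sqrt{-2}\,c_i \cdot \tfrac{1}{\sqrt{-2}}\,c_i x_i = c_i^2 x_i = x_i,
\]
and similarly $\Phi(\Psi(b_i)) = b_i$. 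Together with Theorem~\ref{th:isofinite} on the finite part, this proves the two maps are mutually inverse superalgebra isomorphisms.

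The main obstacle is the bookkeeping in the mixed-relation computation above: three different sign sources (the Clifford anticommutation $c_{i+1} c_i = -c_i c_{i+1}$, the tensor-product rule forcing $b_j c_i = -c_i b_j$ and $t_j c_i = -c_i t_j$, and the scalar identity $\sqrt{-2}\cdot(-\sqrt{-1})/\sqrt{2} = 1$) must combine to produce exactly the factor $(1+c_i c_{i+1})$ out in front, so that the spin defining relation $t_i b_i + b_{i+1} t_i = u$ in $\saH_W$ can be used to recover $u(1 - c_{i+1}c_i)$. Once this single calculation is done cleanly, the extensions to $D_n$ and $B_n$ are variations on the theme, and everything else is formal.
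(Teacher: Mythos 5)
Your proposal is correct and follows essentially the same route as the paper: extend $\Phi$ and $\Psi$ on the generators $x_i$ and $b_i$, use the super tensor-product sign rules to reduce each mixed relation (e.g.\ $x_{i+1}s_i - s_i x_i = u(1-c_{i+1}c_i)$ and its $D_n$/$B_n$ analogues) to the corresponding spin relation $t_ib_i + b_{i+1}t_i = u$, and conclude by observing that $\Phi$ and $\Psi$ are mutually inverse on generators. The key scalar and sign bookkeeping you carry out matches the paper's computation exactly.
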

Theorem~\ref{th:isomDB} also holds for $W_{A_{n-1}}$ (see
\cite{W1}).

\begin{proof}
We only need to show that $\Phi$ preserves the defining relations
in $\aHC_W$ which involve $x_i$'s.

Let $W=W_{D_n}$. Here, we will verify two such relations below.
The verification of the remaining relations is simpler and will be
skipped. For $1 \leq i \leq n-1$, we have
{\allowdisplaybreaks
\begin{align*}
\Phi(x_{i+1}s_i -s_ix_i) & = c_{i+1}b_{i+1}(c_{i}-c_{i+1})t_i -
(c_{i}-c_{i+1})t_i c_{i}b_{i}\\
& =(1-c_{i+1}c_i)b_{i+1} t_i + (1-c_{i+1}c_i)t_i b_i\\
& =u(1-c_{i+1}c_i), \\
\Phi(s_nx_n +x_{n-1}s_n) & = (c_{n-1}+c_{n})t_n c_{n}b_{n} +
c_{n-1}b_{n-1}(c_{n-1}+c_{n})t_n\\
& =-(1+c_{n-1}c_n) t_nb_{n} - (1+c_{n-1}c_n)b_{n-1}t_n\\
& =-u(1+c_{n-1}c_n).
\end{align*}
}

Now let $W=W_{B_n}$. For $1 \leq i \leq n-1$, as in the proof in
type $D_n$, we have $\Phi(x_{i+1}s_i -s_ix_i) =u(1-c_{i+1}c_i)$.
Moreover, we have
{\allowdisplaybreaks
\begin{align*}
\Phi(s_n x_n + x_n s_n) & =
\displaystyle\frac{\sqrt{-2}}{\sqrt{-1}}c_n t_n c_{n}b_{n} +
\displaystyle\frac{\sqrt{-2}}{\sqrt{-1}}c_{n}b_{n} c_{n}t_n\\
& =\sqrt{2}c_n t_n c_n b_n + \sqrt{2} c_n b_n c_n t_n\\
& =-\sqrt{2}(t_n b_n + b_n c_n) =-\sqrt{2}v, \\
\Phi(s_n x_j) & = \displaystyle\frac{\sqrt{-2}}{\sqrt{-1}}c_n t_n
c_{j}b_{j}
=\sqrt{2}c_n t_n c_j b_j \\
& =\sqrt{2}c_j c_n t_n b_j =\sqrt{2}c_j b_j c_n t_n =\Phi(x_j
s_n), \text{ for } j \neq n.
\end{align*}
}
Thus $\Phi$ is a homomorphism of (super)algebras. Similarly, we
check that $\Psi$ is a superalgebra homomorphism. Observe that
$\Phi$ and $\Psi$ are inverses on generators and hence they are
indeed (inverse) isomorphisms.
\end{proof}

\subsection{PBW basis for $\saH_W$}

Note that $\saH_W$ contains the skew-polynomial algebra $\Cl
[b_1,\ldots,b_n]$ and the spin Weyl group algebra $\C W^-$ as
subalgebras. We have the following PBW basis theorem for $\saH_W$.

\begin{theorem} \label{PBW:DBspin}
Let $W=W_{D_n}$ or $W=W_{B_n}$. The multiplication of the
subalgebras $\C W^-$ and $\Cl [b_1,\ldots,b_n]$ induces a vector
space isomorphism
\[
\Cl [b_1,\ldots,b_n] \otimes \C W^-
\stackrel{\simeq}{\longrightarrow} \saH_W.
\]
\end{theorem}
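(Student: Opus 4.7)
The plan is to bootstrap the PBW theorem for $\saH_W$ from the already-proved PBW theorem for $\aHC_W$ (Theorem~\ref{PBW:DB}) via the superalgebra isomorphism $\Phi:\aHC_W\to\Cl_n\otimes\saH_W$ of Theorem~\ref{th:isomDB}, combined with a direct spanning argument. First I would check that the products $\{b^\alpha w : \alpha\in\Z_+^n,\; w \text{ in a basis of } \C W^-\}$ span $\saH_W$. The defining relations of $\saH_W$ let one push every $t_i$ to the right of every $b_j$: each swap either interchanges $t_i$ and some $b_k$ up to sign (preserving the $b$-degree) or uses one of $t_ib_i+b_{i+1}t_i=u$, $t_nb_n+b_{n-1}t_n=u$ (type $D$), $t_nb_n+b_nt_n=v$ (type $B$), producing a scalar correction that strictly lowers the $b$-degree. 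A double induction on word length in the $t$'s and on total $b$-degree then shows that the multiplication map $\mu:\Cl[b_1,\ldots,b_n]\otimes\C W^-\to\saH_W$ is surjective.

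Next I would set up compatible filtrations by putting $\deg b_i=\deg x_i=1$ and $\deg t_i=\deg c_i=\deg s_i=0$. Because $\Phi(x_i)=\sqrt{-2}\,c_ib_i$ is homogeneous of degree $1$ while $\Phi(s_i)=-\sqrt{-1}\,\be_it_i$ and $\Phi(c_i)=c_i$ sit in degree $0$, both $\Phi$ and its inverse $\Psi$ preserve the filtration, so $\Phi$ is a filtered isomorphism. Hence $\dim F^k\aHC_W = 2^n\cdot\dim F^k\saH_W$ in every filtration level. Theorem~\ref{PBW:DB} gives $\dim F^k\aHC_W=\binom{n+k}{k}\cdot 2^n\cdot|W|$, so
\[
\dim F^k\saH_W=\binom{n+k}{k}\cdot|W|.
\]

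On the other hand, the spanning set from the first paragraph contributes at most $\binom{n+k}{k}\cdot|W|$ elements to $F^k\saH_W$, because the skew-polynomial algebra $\Cl[b_1,\ldots,b_n]$ has the same Hilbert series as the ordinary polynomial algebra $\C[b_1,\ldots,b_n]$ and $\dim\C W^-=|W|$. Matching this upper bound against the exact dimension just computed forces the spanning set to be linearly independent, so $\mu$ is a vector space isomorphism.

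The main obstacle is the spanning step: one must verify carefully that every correction term produced when moving $t_i$ past a $b_j$ really does decrease the $b$-degree, with special attention to the type-$B_n$ and type-$D_n$ relations involving $t_n$, $b_{n-1}$, and $b_n$, so that the double induction terminates. Once surjectivity is established, the dimension count is an essentially formal consequence of Theorem~\ref{th:isomDB} and Theorem~\ref{PBW:DB}.
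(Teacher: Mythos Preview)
Your proposal is correct and follows essentially the same strategy as the paper: establish spanning directly from the defining relations, then deduce linear independence from the isomorphism $\Phi:\aHC_W\to\Cl_n\otimes\saH_W$ together with the already-proved PBW theorem for $\aHC_W$. The only difference is in how the linear-independence step is executed. The paper argues more directly: the images $\Psi(b^\alpha\sigma)$ in $\aHC_W$ are linearly independent by the PBW basis there (a short computation shows $\Psi(b^\alpha\sigma)$ equals a nonzero scalar times $x^\alpha$ multiplied by an element of $\Cl_n\rtimes\C W$, and one then reads off independence from the PBW basis $\{x^\alpha c^\epsilon w\}$). You instead set up compatible filtrations and match dimensions level by level. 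Your route is slightly more elaborate but has the virtue of avoiding any explicit computation of the images $\Psi(b^\alpha\sigma)$; the paper's route is shorter but leaves that computation implicit. Either way, the essential input is the same pair of results (Theorems~\ref{th:isomDB} and~\ref{PBW:DB}).
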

Theorem~\ref{PBW:DBspin} also holds for $W_{A_{n-1}}$ (see
\cite{W1}).

\begin{proof}
It follows from the definition that $\saH_W$ is spanned by the
elements of the form $b^{\al} \sigma$ where $\sigma$ runs over a
basis for $\C W^-$ and $\al \in \Z_+^n$. By
Theorem~\ref{th:isomDB}, we have an isomorphism
$\psi:\Cl_{n}\otimes \frakH^-_W{\longrightarrow}\aHC_W. $ Observe
that the image $\psi(b^{\al} \sigma)$ are linearly independent in
$\aHC_W$ by the PBW basis Theorem~\ref{PBW:DB} for $\aHC_W$. Hence
the elements $b^{\al} \sigma$ are linearly independent in
$\saH_W$.
\end{proof}

\subsection{The even center for $\saH_W$}

\begin{proposition}
Let $W=W_{D_n}$ or $W=W_{B_n}$. The even center of $\saH_W$ is
isomorphic to $\C [b_1^2,\ldots,b_n^2]^{W}$.
\end{proposition}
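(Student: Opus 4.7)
The plan is to reduce the computation to the analogous one for $\aHC_W$ (Proposition~\ref{centerAff}), exploiting the superalgebra isomorphism $\Phi:\aHC_W\stackrel{\sim}{\longrightarrow}\Cl_n\otimes\saH_W$ of Theorem~\ref{th:isomDB}.

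First I would verify that $\Phi$ carries the polynomial center of $\aHC_W$ into $1\otimes\saH_W$. Because $b_ic_i=-c_ib_i$ in the graded tensor product, a direct computation gives
\[
\Phi(x_i^2)=(\sqrt{-2}\,c_ib_i)^2=-2\,c_ib_ic_ib_i=2b_i^2,
\]
so $\Phi$ restricts to an isomorphism $\C[x_1^2,\ldots,x_n^2]\stackrel{\sim}{\longrightarrow}1\otimes\C[b_1^2,\ldots,b_n^2]$. On these squared-variable subalgebras, both $W$-actions reduce to the permutation action of $S_n$ on the $x_i^2$'s, respectively $b_i^2$'s (the sign-change subgroup acts trivially on squares; a short direct check using the defining relations of $\saH_W$ confirms that $t_i\cdot(b_j^2)=(b_j^2)^{s_i}\cdot t_i$ for all $i,j$). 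Hence $\Phi$ identifies $W$-invariants. Since $\Phi$ is a superalgebra isomorphism it preserves even centers, and Proposition~\ref{centerAff} upgrades to
\[
Z(\Cl_n\otimes\saH_W)\;=\;1\otimes\C[b_1^2,\ldots,b_n^2]^W.
\]

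The heart of the argument is then the formal identity $Z(\Cl_n\otimes\saH_W)=1\otimes Z(\saH_W)$. The inclusion $\supseteq$ is essentially free: for even central $z\in\saH_W$, the graded-product sign $(-1)^{|z||a|}$ is trivial, so $1\otimes z$ strictly commutes with every $c^\epsilon\otimes a$. For the reverse inclusion I would expand an even central element as $X=\sum_\epsilon c^\epsilon\otimes a_\epsilon$ in the PBW basis $\{c^\epsilon\}$ of $\Cl_n$ (with $|a_\epsilon|\equiv|\epsilon|\pmod 2$), and impose $(c_j\otimes 1)X=X(c_j\otimes 1)$ for each $j$. The graded product gives $(c^\epsilon\otimes a_\epsilon)(c_j\otimes 1)=(-1)^{|\epsilon|}c^\epsilon c_j\otimes a_\epsilon$, while $c^\epsilon c_j=(-1)^{|\epsilon|}c_jc^\epsilon$ if $\epsilon_j=0$ and $c^\epsilon c_j=(-1)^{|\epsilon|+1}c_jc^\epsilon$ if $\epsilon_j=1$; matching coefficients then forces $a_\epsilon=0$ whenever some $\epsilon_j=1$, i.e., whenever $\epsilon\neq 0$. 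This is the concrete reflection of the fact that the graded center of $\Cl_n$ is $\C$. So $X=1\otimes a_0$, and commuting $X$ with $1\otimes\saH_W$ places $a_0$ in $Z(\saH_W)$.

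Combining the two displayed equalities yields $Z(\saH_W)=\C[b_1^2,\ldots,b_n^2]^W$, as claimed. The only delicate step is the sign bookkeeping in the commutator $[X,c_j\otimes 1]$, which is essentially the triviality of the graded center of $\Cl_n$; everything else is a formal consequence of Theorem~\ref{th:isomDB} and Proposition~\ref{centerAff}. A self-contained alternative would be to mimic the proof of Proposition~\ref{centerAff} inside $\saH_W$ directly, by first deriving a spin analog of Propositions~\ref{Didentity} and~\ref{Bidentity} describing the action of $t_i$ on $\Cl[b_1,\ldots,b_n]$, and then running the Bruhat-order plus commutation argument; this avoids tensor-product sign gymnastics but is considerably more laborious.
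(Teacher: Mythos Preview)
Your proof is correct and follows essentially the same route as the paper: transport the problem through the isomorphism $\Phi$ of Theorem~\ref{th:isomDB} and invoke Proposition~\ref{centerAff}. The paper is slightly slicker in that it never proves the general identity $Z(\Cl_n\otimes\saH_W)=1\otimes Z(\saH_W)$; once it knows $Z(\Cl_n\otimes\saH_W)=\C[b_1^2,\ldots,b_n^2]^W$ already sits inside $1\otimes\saH_W$, the inclusion $\C[b_1^2,\ldots,b_n^2]^W\subseteq Z(\saH_W)$ is immediate, and for the reverse inclusion it simply observes that any even $C\in Z(\saH_W)$ gives $1\otimes C\in Z(\Cl_n\otimes\saH_W)$ and pushes back through $\Psi$---so your PBW expansion against $c_j\otimes 1$ is correct but unnecessary.

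One small correction: your parenthetical claim that $t_i\,b_j^2=(b_j^2)^{s_i}\,t_i$ is false for $j\in\{i,i+1\}$; a direct computation from $t_ib_i+b_{i+1}t_i=u$ gives $t_ib_i^2=b_{i+1}^2t_i+u(b_i-b_{i+1})$, with a nonzero correction term. Fortunately this remark is not needed: the identification $\Phi\bigl(\C[x_1^2,\ldots,x_n^2]^W\bigr)=\C[b_1^2,\ldots,b_n^2]^W$ follows purely from $\Phi(x_i^2)=2b_i^2$ together with the observation that on squared variables the $W$-action (defined via the reflection representation on $\h^*$, not via conjugation by $t_i$) reduces to the $S_n$-permutation action on both sides.
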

\begin{proof}

By the isomorphism $\Phi: \aHC_W \rightarrow \Cl_n \otimes \saH_W$
(see Theorems~\ref{th:isomDB}) and the description of the center
$Z(\aHC_W)$ (see Proposition~\ref{centerAff}), we have
$$Z(\Cl_n \otimes
\saH_W) =\Phi(Z(\aHC_W)) = \Phi(\C [x_1^2,\ldots,x_n^2]^W) =\C
[b_1^2,\ldots,b_n^2]^W.
$$
Thus, $\C [b_1^2,\ldots,b_n^2]^W \subseteq Z(\saH_W)$.

Now let $C \in Z(\saH_W)$. Since $C$ is even, $C$ commutes with
$\Cl_n$ and thus commutes with the algebra $\Cl_n \otimes \saH_W$.
Then $\Psi (C) \in Z(\aHC_W) =\C [x_1^2,\ldots,x_n^2]^W$, and thus,
$C =\Phi\Psi(C) \in \Phi(\C [x_1^2,\ldots,x_n^2]^W) =\C
[b_1^2,\ldots,b_n^2]^W$.
\end{proof}
In light of the isomorphism Theorem~\ref{th:isomDB}, the problem
of classifying the simple modules of the spin affine Hecke algebra
$\saH_W$ is equivalent to the classification problem for the
affine Hecke-Clifford algebra $\aHC_W$. It remains to be seen
whether it is more convenient to find the geometric realization of
$\saH_W$ instead of $\aHC_W$.

\subsection{The intertwiners in $\saH_W$}

The intertwiners $\I_i  \in \saH_W$ $(1\le i \le n-1)$ for
$W=W_{A_{n-1}}$ were introduced in \cite{W1} (with $u=1$):
\begin{eqnarray} \label{spintertwinersA}
\I_i  = (b_{i+1}^2 -  b_i^2)t_i - u(b_{i+1}-b_i).
\end{eqnarray}
The commutation relations in Definition~\ref{def:saHD} gives us
another equivalent expression for $\I_i$:
\begin{eqnarray*}
\I_i  = t_i (b_i^2 -  b_{i+1}^2) + u(b_{i+1}-b_i).
\end{eqnarray*}

We define the intertwiners $\I_i \in \saH_W$ for $W=W_{D_n}$
$(1\le i \le n)$ by the same formula (\ref{spintertwinersA}) for
$1\le i \le n-1$ and in addition by letting
\begin{align} \label{spintertwinersD}
\I_n \equiv \I_n^D = (b_{n}^2 - b_{n-1}^2)t_n -u(b_{n}-b_{n-1}).
\end{align}

Also, we define the intertwiners $\I_i \in \saH_W$ for $W=W_{B_n}$
$(1\le i \le n)$ by the same formula (\ref{spintertwinersA}) for
$1\le i \le n-1$ and in addition by letting
\begin{align} \label{spintertwinersB}
\I_n \equiv \I_n^B = 2b_n^2 t_n - v b_n.
\end{align}

\begin{proposition} \label{spincomm}
The following identities hold in $\saH_W$, for $W =W_{A_{n-1}}$,
$W_{B_n}$, or $W_{D_n}$:
\begin{enumerate}
\item $\I_i  b_i  = -b_{i+1} \I_i, \I_i  b_{i+1}  = -b_i \I_i,$
and $\I_i  b_j  = -b_j \I_i \; (j \neq i,i+1)$, for $1\le i \le
n-1, 1\le j \le n,$ and any $W$;

\noindent In addition,

\item $\I_n b_{n-1} = -b_{n}\I_n, \I_n b_n = -b_{n-1}\I_n,$ and
$\I_n b_i = -b_i\I_n \; (i \neq n-1,n)$, for type $D_n$;

\item   $\I_n b_{n} = -b_{n}\I_n$, and $\I_n b_i = -b_i\I_n \;
(i\neq n)$, for type $B_n$.
\end{enumerate}
\end{proposition}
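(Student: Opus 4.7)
The plan is to verify each identity by direct computation in $\saH_W$, using two simple observations. First, in the skew-polynomial subalgebra $\Cl[b_1,\ldots,b_n]$ every square $b_k^2$ is central, since for $j \neq k$ we have $b_k^2 b_j = b_k(-b_j b_k) = b_j b_k^2$. Second, each $t_i$ appearing (including $t_n$ in types $B_n$ and $D_n$) satisfies $t_i^2 = 1$ by the spin Weyl group relations, so multiplying the defining relation $t_i b_i + b_{i+1} t_i = u$ on both sides by $t_i$ produces the symmetric companion $t_i b_{i+1} + b_i t_i = u$. The same trick upgrades the type $D_n$ relation $t_n b_n + b_{n-1} t_n = u$ to $t_n b_{n-1} + b_n t_n = u$.

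For part (1) and $j \notin \{i, i+1\}$, the relation $t_i b_j = -b_j t_i$ combined with $b_j b_i = -b_i b_j$, $b_j b_{i+1} = -b_{i+1} b_j$, and centrality of $b_i^2, b_{i+1}^2$, transports $b_j$ past each summand of $\I_i = (b_{i+1}^2 - b_i^2) t_i - u(b_{i+1} - b_i)$ with a single overall sign, giving $\I_i b_j = -b_j \I_i$. For $j = i$, substitute $t_i b_i = u - b_{i+1} t_i$ into $\I_i b_i$, expand, and use $b_i^2 b_{i+1} = b_{i+1} b_i^2$ to collect $-b_{i+1}$ as a common left factor; the remaining bracket is exactly $\I_i$. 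The case $j = i+1$ is the mirror image using the symmetrized relation $t_i b_{i+1} = u - b_i t_i$.

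Parts (2) and (3) follow the same template. For $\I_n^D$, once $t_n b_{n-1} + b_n t_n = u$ is in hand the computation is verbatim that of part (1) with the triple $(b_{n-1}, b_n, t_n)$ replacing $(b_i, b_{i+1}, t_i)$. For $\I_n^B = 2 b_n^2 t_n - v b_n$ and $j \neq n$, centrality of $b_n^2$ together with $t_n b_j = -b_j t_n$ immediately gives $\I_n^B b_j = -b_j \I_n^B$; for $j = n$, substituting $t_n b_n = v - b_n t_n$ yields $\I_n^B b_n = 2 b_n^2(v - b_n t_n) - v b_n^2 = v b_n^2 - 2 b_n^3 t_n = -b_n \I_n^B$.

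The only nuisance is sign bookkeeping when transporting a single $b_j$ through odd generators, but the centrality of the squares $b_k^2$ and the clean cancellations produced by the $u$-- and $v$--relations keep each case to a few lines. I do not expect any deeper obstacle.
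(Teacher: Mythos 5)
Your proposal is correct and follows essentially the same route as the paper: a direct computation substituting the defining relation (and its $t$-conjugate, obtained from $t_i^2=1$) into $\I_i b_j$ and factoring out $-b_{j'}$ using the centrality of the squares $b_k^2$ in $\Cl[b_1,\ldots,b_n]$. The paper only writes out the $j=i$ case of (1) and the first identity of (2), declaring the rest similar; your outline makes explicit the companion relations $t_ib_{i+1}+b_it_i=u$ and $t_nb_{n-1}+b_nt_n=u$ that those skipped cases rely on.
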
 \label{prop:spintertA}
\begin{proof}
  (1) We first prove the case when $j =i$:
    \begin{eqnarray*}
        \I_i  b_i
        &=& (b_{i+1}^2 -  b_i^2)t_ib_i -
        u(b_{i+1}-b_i)b_i \\
        &=& (b_{i+1}^2 -  b_i^2)(-b_{i+1}t_i +u) -
        u(b_{i+1}b_i-b_i^2) \\
        &=& -b_{i+1}\left((b_{i+1}^2 -  b_i^2)t_i -
        u(b_{i+1}-b_i)\right)\\
        &=& -b_{i+1} \I_i.
    \end{eqnarray*}

The proof for $\I_i  b_{i+1}  = -b_i \I_i$ is similar and thus
skipped.
%

    For $j \neq i, i+1$, we have $t_i b_j = -b_j t_i$, and hence $\I_i b_j
    = -b_j \I_i$.

(2) We prove only the first identity. The proofs of the remaining
two identities are similar and will be skipped.
    \begin{eqnarray*}
    \I_n b_{n-1}
    &=& (b_{n}^2 - b_{n-1}^2)t_nb_{n-1} -u(b_{n}-b_{n-1})b_{n-1} \\
    &=& (b_{n}^2 - b_{n-1}^2)(-b_nt_n +u) -u(b_{n}b_{n-1}-b_{n-1}^2) \\
    &=& -b_{n}\left((b_{n}^2 - b_{n-1}^2)t_n -u(b_{n}-b_{n-1})\right)\\
    &=& -b_{n} \I_n.
    \end{eqnarray*}

The proof of (3) is analogous to (2), and is thus skipped.
\end{proof}

Recall the superalgebra isomorphism $\Phi:\aHC_W {\longrightarrow
}\Cl_n \otimes \saH_W$ defined in Section~\ref{sec:spin} and the
elements $\beta_i \in \Cl_n$ defined in Section~\ref{sec:finite}.

\begin{theorem} \label{isom:intertw}
Let W be either $W_{A_{n-1}}$, $W_{D_n}$, or $W_{B_n}$. The
isomorphism $\Phi: \aHC_W \longrightarrow \Cl_n \otimes \saH_W$
sends $\phi_i$ $\mapsto -2 \sqrt{-1}\be_i \I_i$ for each $i$. More
explicitly, $\Phi$ sends
\begin{align*}
 \phi_i &\longmapsto -\sqrt{-2}(c_i-c_{i+1}) \otimes \I_i \quad (1 \leq i \leq n-1); \\
\phi_n &\longmapsto -\sqrt{-2}(c_{n-1} +c_n) \otimes \I_n \quad \text{for type $D_n$}; \\
\phi_n &\longmapsto - 2\sqrt{-1} c_n \otimes \I_n \quad \text{for
type $B_n$}.
\end{align*}
\end{theorem}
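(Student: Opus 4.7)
The plan is to verify the formula by direct substitution, using the explicit definition of $\Phi$ from Theorems~\ref{th:isofinite} and \ref{th:isomDB}: $\Phi(x_i) = \sqrt{-2}\,c_i b_i$ and $\Phi(s_i) = -\sqrt{-1}\,\be_i t_i$. Since the right-hand sides of the three asserted identities correspond exactly to expanding $-2\sqrt{-1}\,\be_i \otimes \I_i$ via the Clifford formulas $\be_i = (c_i - c_{i+1})/\sqrt{2}$ for $i\le n-1$, $\be_n^D = (c_{n-1}+c_n)/\sqrt{2}$, and $\be_n^B = c_n$ from the table of Section~\ref{subsec:cliff}, it suffices to prove the uniform statement $\Phi(\phi_i) = -2\sqrt{-1}\,\be_i\otimes \I_i$.

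First I would compute $\Phi$ on the quadratic piece. Since $c_j$ and $b_j$ are both odd, the Koszul sign rule gives $(c_j\otimes b_j)(c_j\otimes b_j) = -1\otimes b_j^2$, whence $\Phi(x_j^2) = 2b_j^2$ and $\Phi(x_{i+1}^2 - x_i^2) = 2(b_{i+1}^2 - b_i^2)$. Because $b_{i+1}^2-b_i^2$ is even, multiplying by $\Phi(s_i)$ introduces no extra sign and yields
\[
\Phi\bigl((x_{i+1}^2 - x_i^2)s_i\bigr) = -2\sqrt{-1}\,\be_i \otimes (b_{i+1}^2 - b_i^2)\,t_i.
\]
For $1\le i\le n-1$ I would then expand the linear correction $-u(x_{i+1}+x_i) - u(x_{i+1}-x_i)c_i c_{i+1}$. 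The key reductions $c_{i+1}c_i c_{i+1} = -c_i$ and $c_i c_i c_{i+1} = c_{i+1}$ (again combined with the Koszul rule for moving even Clifford factors past the odd $b_j$) make the four resulting monomials collapse into
\[
-u\sqrt{-2}\,(c_{i+1} - c_i)\otimes (b_{i+1} - b_i) = 2u\sqrt{-1}\,\be_i\otimes(b_{i+1}-b_i),
\]
the last step using $\be_i = (c_i - c_{i+1})/\sqrt{2}$ and $\sqrt{-2}\sqrt{2}=2\sqrt{-1}$. Adding the two contributions gives $\Phi(\phi_i) = -2\sqrt{-1}\,\be_i\otimes \I_i$, as desired.

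The case $\phi_n^D$ is structurally identical: after $\Phi$, the combination $u(x_n - x_{n-1}) - u(x_n+x_{n-1})c_{n-1}c_n$ reduces via $c_n c_{n-1}c_n = -c_{n-1}$ and $c_{n-1}^2 c_n = c_n$ to $u\sqrt{-2}(c_{n-1}+c_n)\otimes(b_n - b_{n-1}) = 2u\sqrt{-1}\,\be_n^D\otimes(b_n - b_{n-1})$, and the quadratic term contributes $-2\sqrt{-1}\,\be_n^D\otimes (b_n^2 - b_{n-1}^2)t_n$. The case $\phi_n^B = 2x_n^2 s_n + \sqrt{2}v x_n$ has only two terms: $\Phi(2x_n^2 s_n) = -4\sqrt{-1}\,c_n\otimes b_n^2 t_n$ and $\Phi(\sqrt{2}v x_n) = 2\sqrt{-1}\,v\,c_n\otimes b_n$, summing to $-2\sqrt{-1}\,c_n\otimes(2b_n^2 t_n - v b_n) = -2\sqrt{-1}\,\be_n^B\otimes \I_n$.

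The only subtle point is bookkeeping of the Koszul signs whenever an odd factor crosses another odd factor, in particular when computing $c_j b_j \cdot c_i c_{i+1}$ in the supertensor product and when squaring $c_j\otimes b_j$. Once these are handled carefully, every step reduces to an elementary identity in $\Cl_n$ plus the observation that $\be_i$ is essentially the Clifford "difference" (resp.\ "sum") of adjacent $c_j$'s. I expect no deeper obstacle; the proof is a bounded, finite computation that is cleanest when organized so that $\Phi$ is applied to the quadratic and linear parts of $\phi_i$ separately and the results are combined at the end.
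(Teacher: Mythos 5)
Your proposal is correct and follows essentially the same route as the paper: apply $\Phi$ term-by-term to the defining expression for $\phi_i$ (quadratic part and linear Clifford-corrected part), use $\Phi(x_j^2)=2b_j^2$ and the reductions $c_{i+1}c_ic_{i+1}=-c_i$, $c_i^2c_{i+1}=c_{i+1}$ to collapse the result, and factor out $-2\sqrt{-1}\,\be_i$. The sign bookkeeping you describe checks out, and your explicit treatment of the type $B_n$ case is the one computation the paper omits as ``very similar.''
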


\begin{proof}
   Recall that the isomorphism $\Phi$ sends $s_i \mapsto -\sqrt{-1} \be_i t_i,$
   $x_{i}\mapsto\displaystyle \sqrt{-2}c_{i}b_{i}$ for each $i$.
   So, for $1\leq i\leq n-1$, we have the following
   \begin{eqnarray*}
        \Phi(\phi_i)
        &=& \Phi \left ((x_{i+1}^2 -  x_i^2)s_i - u(x_{i+1}+x_i) - u
            (x_{i+1}-x_i)c_ic_{i+1}\right)\\
        &=& -\sqrt{-2}(c_i-c_{i+1})(b_{i+1}^2 - b_i^2)t_i -
        u\sqrt{-2}(c_{i+1}b_{i+1}-c_ib_i) \\
        && - u\sqrt{-2}(c_{i+1}b_{i+1}-c_ib_i)c_ic_{i+1}\\
        &=& -\sqrt{-2}(c_i - c_{i+1})\left((b_{i+1}^2 - b_i^2)t_i
        -u(b_{i+1}-b_i)\right)\\
        &=& -\sqrt{-2}(c_i-c_{i+1}) \otimes \I_i.
   \end{eqnarray*}
   Next for $\phi_n \in \aHC_{D_n}$, we have
   \begin{eqnarray*}
        \Phi(\phi_n)
        &=& \Phi \left ((x_n^2-x_{n-1}^2)s_n + u(x_n - x_{n-1}) - u(x_n +
            x_{n-1})c_{n-1}c_n\right)\\
        &=& -\sqrt{-2}(c_n+c_{n-1})(b_{n}^2 - b_{n-1}^2)t_n +
        u\sqrt{-2}(c_{n}b_{n}-c_{n-1}b_{n-1}) \\
        && - u\sqrt{-2}(c_{n}b_{n}-c_{n-1}b_{n-1})c_{n-1}c_{n}\\
        &=& -\sqrt{-2}(c_n + c_{n-1})\left((b_{n}^2 - b_{n-1}^2)t_n
        -u(b_{n}-b_{n-1})\right)\\
        &=& -\sqrt{-2}(c_{n-1} +c_n) \otimes \I_n.
   \end{eqnarray*}

We skip the computation for $\phi_n \in \aHC_{B_n}$ which is very
similar but less complicated.
\end{proof}

\begin{proposition}\label{prop:spintert}
The following identities hold in $\saH_W$, for $W=W_{A_{n-1}}$,
$W_{B_n}$, or $W_{D_n}$:
\begin{enumerate}
    \item $\I_i^2 = u^2(b_{i+1}^2+b_i^2)-(b_{i+1}^2 -
    b_i^2)^2,$ for $1 \leq i \leq n-1$ and every type of $W$. \label{spintert1}
    \item $\I_n^2 = u^2(b_{n}^2+b_{n-1}^2) -(b_{n}^2 -
    b_{n-1}^2)^2,
    \quad$ for type $D_n$. \label{spintert2}
    \item $\I_n^2 = 4b_n^4 - v^2 b_n^2, \quad$ for type $B_n$. \label{spintert3}
\end{enumerate}
\end{proposition}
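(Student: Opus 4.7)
\textbf{Proof plan for Proposition~\ref{prop:spintert}.} The strategy is to transport the known identities for $\phi_i^2$ in $\aHC_W$ (Theorem~\ref{intertwiner}(1)--(3)) to $\saH_W$ via the superalgebra isomorphism $\Phi: \aHC_W \to \Cl_n \otimes \saH_W$ of Theorem~\ref{th:isomDB}, using the explicit formulas from Theorem~\ref{isom:intertw} that identify $\Phi(\phi_i)$ with a scalar multiple of $\I_i$ multiplied by a short Clifford expression. Since $\Phi$ is an isomorphism, the identities for $\I_i^2$ in $\saH_W$ are equivalent to identities in $\Cl_n \otimes \saH_W$ obtained by direct substitution, and this route avoids a lengthy manipulation starting from the defining relations of $\saH_W$.

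First I would compute $\Phi(x_i^2)$. Since $\Phi(x_i) = \sqrt{-2}\,c_i b_i$ and the odd generators super-commute as $b_i c_i = -c_i b_i$ inside $\Cl_n \otimes \saH_W$, we get
\[
\Phi(x_i^2) = (\sqrt{-2})^2 (c_i b_i)^2 = -2 \cdot (-c_i^2 b_i^2) = 2\,b_i^2.
\]
Since the elements $b_i^2$ are even and pairwise commuting (as $b_i b_j = -b_j b_i$ for $i\neq j$ makes $b_i^2 b_j^2 = b_j^2 b_i^2$), this gives $\Phi(x_{i+1}^2 + x_i^2) = 2(b_{i+1}^2 + b_i^2)$, $\Phi((x_{i+1}^2 - x_i^2)^2) = 4(b_{i+1}^2 - b_i^2)^2$, and $\Phi(x_n^4) = 4 b_n^4$.

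Next, for $1 \le i \le n-1$ (in any of the three types), Theorem~\ref{isom:intertw} gives $\Phi(\phi_i) = -\sqrt{-2}(c_i - c_{i+1})\,\I_i$. Both $c_i - c_{i+1}$ and $\I_i$ are odd, and $(c_i - c_{i+1})^2 = 2$ (the cross terms cancel), so super-commutation in the tensor product yields
\[
\Phi(\phi_i)^2 = (-\sqrt{-2})^2 \cdot (-1)\,(c_i - c_{i+1})^2\,\I_i^2 = (-2)(-1)(2)\,\I_i^2 = 4\,\I_i^2.
\]
Combined with $\Phi(\phi_i^2) = 4u^2(b_{i+1}^2 + b_i^2) - 4(b_{i+1}^2 - b_i^2)^2$, obtained by applying $\Phi$ to Theorem~\ref{intertwiner}(1), canceling the factor $4$ proves part (1).

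Parts (2) and (3) go by the same template. For type $D_n$, $(c_{n-1} + c_n)^2 = 2$ since $c_{n-1}c_n + c_n c_{n-1} = 0$, so the computation for $\Phi(\phi_n) = -\sqrt{-2}(c_{n-1}+c_n)\I_n$ is formally identical to part (1) with the indices shifted, and Theorem~\ref{intertwiner}(2) yields part (2). For type $B_n$, using $\Phi(\phi_n) = -2\sqrt{-1}\,c_n\,\I_n$ and the super-commutation $(c_n \I_n)^2 = -c_n^2 \I_n^2 = -\I_n^2$, one finds $\Phi(\phi_n)^2 = (-4)(-\I_n^2) = 4\I_n^2$, while applying $\Phi$ to Theorem~\ref{intertwiner}(3) gives $\Phi(\phi_n^2) = 16 b_n^4 - 4v^2 b_n^2$; canceling the $4$ proves part (3). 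The only point requiring care is correctly tracking the Koszul signs in super-commutation, but this is routine; no serious obstacle is expected.
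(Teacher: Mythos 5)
Your proposal is correct and is exactly the route the paper takes: the paper's proof simply states that the identities follow from Theorem~\ref{intertwiner}(1)--(3) via the correspondence of intertwiners under $\Phi$ given in Theorem~\ref{isom:intertw}, and your computations ($\Phi(x_i^2)=2b_i^2$, $\Phi(\phi_i)^2=4\I_i^2$ with the Koszul signs tracked correctly) supply precisely the details the paper leaves implicit.
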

\begin{proof}
It follows from the counterparts in Theorem~\ref{intertwiner} via
the explicit correspondences under the isomorphism $\Phi$ (see
Theorem~\ref{isom:intertw}). It can of course also be proved by a
direct computation.
%
\end{proof}

\begin{proposition}\label{spinterbraided}
For $W=W_{A_{n-1}}$, $W_{B_n}$, or $W_{D_n}$, we have
\begin{eqnarray*}
     \underbrace{\I_i \I_j \I_i \cdots}_{m_{ij}}
      & = (-1)^{m_{ij}+1} \underbrace{\I_j\I_i \I_j
      \cdots}_{m_{ij}}.
\end{eqnarray*}
\end{proposition}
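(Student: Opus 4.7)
The plan is to transport the ordinary braid identities for the Hecke--Clifford intertwiners $\phi_i \in \aHC_W$ (Theorem~\ref{intertwiner}(6)) across the isomorphism $\Phi: \aHC_W \to \Cl_n \otimes \saH_W$ of Theorem~\ref{th:isomDB}, and to read off the sign $(-1)^{m_{ij}+1}$ from the spin Weyl relation satisfied by the Clifford generators $\be_k \in \Cl_n$ (Theorem~\ref{th:morris}).

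First I would unify Theorem~\ref{isom:intertw} into the single formula
\[
\Phi(\phi_k) = -2\sqrt{-1}\,\be_k \otimes \I_k,
\]
valid for every generator index $k$ in each classical type (in the type $D_n$ and $B_n$ cases for $k=n$ this follows after rewriting $-\sqrt{-2}(c_{n-1}+c_n) = -2\sqrt{-1}\,\be_n$ and $c_n = \be_n$ respectively). Next I would compute $\Phi$ of a length-$m$ product of the $\phi_{i_j}$'s. Since $\be_k$ is odd in $\Cl_n$ and $\I_k$ is odd in $\saH_W$, a short induction using the Koszul rule $(a\otimes b)(a'\otimes b')=(-1)^{|b||a'|}(aa'\otimes bb')$ gives
\[
\Phi(\phi_{i_1}\phi_{i_2}\cdots\phi_{i_m}) = (-2\sqrt{-1})^m\,(-1)^{\binom{m}{2}}\,(\be_{i_1}\cdots\be_{i_m})\otimes(\I_{i_1}\cdots\I_{i_m}).
\]
Specializing to $m=m_{ij}$ and the two alternating words, the braid identity $\phi_i\phi_j\phi_i\cdots = \phi_j\phi_i\phi_j\cdots$ produces identical scalar and Koszul factors on both sides, which cancel to leave
\[
(\be_i\be_j\be_i\cdots)\otimes(\I_i\I_j\I_i\cdots) = (\be_j\be_i\be_j\cdots)\otimes(\I_j\I_i\I_j\cdots).
\]

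To finish, I would invoke Theorem~\ref{th:morris}: the $\be_k$'s satisfy $(\be_i\be_j)^{m_{ij}}=(-1)^{m_{ij}+1}$ in $\Cl_n$. Together with $\be_k^2=1$, this is equivalent to
\[
\underbrace{\be_j\be_i\be_j\cdots}_{m_{ij}} = (-1)^{m_{ij}+1}\underbrace{\be_i\be_j\be_i\cdots}_{m_{ij}}.
\]
Substituting this on the right-hand side of the transported braid identity and noting that $\be_i\be_j\be_i\cdots$ is a nonzero element of $\Cl_n$ (each $\be_k$ is a unit), one can cancel this common first tensor factor in the ordinary vector-space tensor product $\Cl_n\otimes\saH_W$ to conclude
\[
\underbrace{\I_i\I_j\I_i\cdots}_{m_{ij}} = (-1)^{m_{ij}+1}\underbrace{\I_j\I_i\I_j\cdots}_{m_{ij}}.
\]

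The main obstacle is bookkeeping the Koszul signs: one must verify that the factor $(-1)^{\binom{m_{ij}}{2}}$ produced by the supertensor multiplication appears identically on both sides of the transported braid equation so that it cancels cleanly, leaving only the Clifford-induced sign $(-1)^{m_{ij}+1}$ from Theorem~\ref{th:morris}. Once that is in hand, the proposition is formal, and in particular the argument is uniform across the classical types $A_{n-1}$, $D_n$, and $B_n$.
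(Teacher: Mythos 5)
Your proposal is correct and follows essentially the same route as the paper: the paper's proof likewise deduces the signed braid relation for the $\I_i$ from Theorem~\ref{intertwiner}(6) and the identity $\underbrace{\be_i\be_j\be_i\cdots}_{m_{ij}}=(-1)^{m_{ij}+1}\underbrace{\be_j\be_i\be_j\cdots}_{m_{ij}}$ (a consequence of Theorem~\ref{th:morris}), transported through $\Phi$ via Theorem~\ref{isom:intertw}. Your version merely makes explicit the Koszul-sign bookkeeping and the cancellation of the invertible Clifford factor, which the paper leaves implicit.
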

\begin{proof}

By Theorem~\ref{th:morris}, we have
\begin{eqnarray*}
\underbrace{\beta_i \beta_j \beta_i \cdots}_{m_{ij}} & =
(-1)^{m_{ij}+1} \underbrace{\beta_j\beta_i \beta_j
\cdots}_{m_{ij}}.
\end{eqnarray*}

Now the statement follows from the above equation and
Theorem~\ref{intertwiner}~(6) via the correspondence of the
intertwiners under the isomorphism $\Phi$ (see
Theorem~\ref{isom:intertw}).
\end{proof}

\begin{remark}
Proposition~\ref{spincomm}, Theorem~\ref{isom:intertw}, and
Proposition~\ref{spinterbraided} for $\saH_{A_{n-1}}$ can be found
in \cite{W1}.
\end{remark}

\section{Degenerate covering affine Hecke algebras}
\label{sec:cover}

In this section, the degenerate covering affine Hecke algebras
associated to the  double covers $\wtd{W}$ of classical Weyl groups
$W$ are introduced. It has as its natural quotients the usual
degenerate affine Hecke algebras $\aH_W$ \cite{Dr, Lu1, Lu2} and the
spin degenerate affine Hecke algebras $\saH_W$ introduced by the
authors.

Recall the distinguished double cover $\wtd{W}$ of a Weyl group
$W$ from Section~\ref{subsec:spinWeyl}.
\subsection{The algebra $\caH_W$ of type $A_{n-1}$}

\begin{definition}
Let $W = W_{A_{n-1}}$, and let $u\in \C$. The degenerate covering
affine Hecke algebra of type $A_{n-1}$, denoted by $\caH_W$ or
$\caH_{A_{n-1}}$, is the algebra generated by
$\td{x}_1,\ldots,\td{x}_n$ and $z, \td{t}_1, \ldots,
\td{t}_{n-1}$, subject to the relations for $\wtd{W}$ and the
additional relations:
\begin{align}
    z\td{x}_i  & =\td{x}_i z,\quad z \text{ is central of order } 2   \label{central}\\
    \td{x}_i \td{x}_j & = z \td{x}_j \td{x}_i \quad (i\neq j) \\
    \td{t}_i \td{x}_j & = z \td{x}_j \td{t}_i \quad (j\neq i, i+1) \\
    \td{t}_i \td{x}_{i+1} & = z \td{x}_i \td{t}_i + u.
    \label{coverHecke}
\end{align}
\end{definition}
Clearly $\caH_W$ contains $\C \wtd{W}$ as a subalgebra.

\subsection{The algebra $\caH_W$ of type $D_n$}

\begin{definition}
Let $W = W_{D_n}$, and let $u\in \C$. The degenerate covering
affine Hecke algebra of type $D_n$, denoted by $\caH_W$ or
$\caH_{D_n}$, is the algebra generated by
$\td{x}_1,\ldots,\td{x}_n$ and $z, \td{t}_1, \ldots, \td{t}_n$,
subject to the relations (\ref{central}--\ref{coverHecke}) and the
following additional relations:
\begin{align*}
    \td{t}_n \td{x}_i & = z \td{x}_i \td{t}_n \quad (i\neq
    n-1,n)\\
    \td{t}_n \td{x}_n & =-\td{x}_{n-1} \td{t}_n + u.
\end{align*}
\end{definition}

\subsection{The algebra $\caH_W$ of type $B_n$}

\begin{definition}
Let $W = W_{B_n}$, and let $u,v \in \C$. The degenerate covering
affine Hecke algebra of type $B_n$, denoted by $\caH_W$ or
$\caH_{B_n}$, is the algebra generated by
$\td{x}_1,\ldots,\td{x}_n$ and $z, \td{t}_1, \ldots, \td{t}_n$,
subject to the relations (\ref{central}--\ref{coverHecke}) and the
following additional relations:
\begin{align*}
    \td{t}_n \td{x}_i & = z \td{x}_i \td{t}_n \quad (i\neq n) \\
    \td{t}_n \td{x}_n & =-\td{x}_n \td{t}_n + v.
\end{align*}
\end{definition}
\subsection{PBW basis for $\caH_W$}

\begin{proposition} \label{quotient}
Let $W = W_{A_{n-1}}, W_{D_n}$, or $W_{B_n}$. Then the quotient of
the covering affine Hecke algebra $\caH_W$ by the ideal $\langle
z-1 \rangle$ (respectively, by the ideal  $\langle z+1 \rangle$)
is isomorphic to the usual degenerate affine Hecke algebras
$\aH_W$ (respectively, the spin degenerate affine Hecke algebras
$\saH_W$).
\end{proposition}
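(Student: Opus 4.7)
The plan is to define, for each classical type, a surjection from $\caH_W$ onto $\aH_W$ (respectively onto $\saH_W$) sending $z \mapsto 1$ (respectively $z \mapsto -1$), by verifying that the defining relations of the target algebra hold among the images of the generators. This surjection factors through the quotient by $\langle z-1 \rangle$ (respectively $\langle z+1 \rangle$). To show each induced quotient map is an isomorphism, I then construct an explicit inverse on generators, which amounts to checking that the relations of $\caH_W$ with $z=1$ (respectively $z=-1$) are satisfied in $\aH_W$ (respectively $\saH_W$).

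For the Weyl-group part, the isomorphisms $\C\wtd{W}/\langle z-1\rangle \cong \C W$ and $\C\wtd{W}/\langle z+1\rangle \cong \C W^-$ are built into the definitions of Section~\ref{subsec:spinWeyl}, so $\td{t}_i$ maps to $s_i$ and to $t_i$ respectively in the two cases. For the polynomial and cross relations of type $A_{n-1}$, setting $z=1$ turns $\td{x}_i\td{x}_j = z\td{x}_j\td{x}_i$ into commutativity and turns the cross relations $\td{t}_i \td{x}_j = z \td{x}_j \td{t}_i$ ($j\ne i,i+1$) and $\td{t}_i \td{x}_{i+1} = z \td{x}_i \td{t}_i + u$ into Lusztig's presentation of $\aH_W$ (using $s_i^2=1$ to rearrange if desired). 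Setting $z=-1$ yields the skew-commutativity of the $b_i := \td{x}_i$, the identities $t_i b_j = -b_j t_i$ for $j \neq i, i+1$, and the relation $t_i b_{i+1} + b_i t_i = u$. This last identity is not literally the one appearing in Definition~\ref{def:saHD}, which reads $t_i b_i + b_{i+1} t_i = u$, but the two are equivalent in $\C W^-$: multiplying the latter on the left and on the right by $t_i$ and invoking $t_i^2 = 1$ produces $b_i t_i + t_i b_{i+1} = u$.

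The extra relations involving $\td{t}_n$ in types $D_n$ and $B_n$ fit this pattern cleanly: the cross relations $\td{t}_n \td{x}_i = z \td{x}_i \td{t}_n$ specialize under $z \mapsto \pm 1$ as above, while the remaining relations $\td{t}_n \td{x}_n = -\td{x}_{n-1} \td{t}_n + u$ (type $D_n$) and $\td{t}_n \td{x}_n = -\td{x}_n \td{t}_n + v$ (type $B_n$) carry no $z$ and match the corresponding defining relations of $\aH_W$ and $\saH_W$ directly, with no need for the $t_n^2=1$ trick. The main, and really only, obstacle is the equivalence described above via $t_i^2=1$; once that is in place, all remaining relations match by inspection and the inverse homomorphisms are defined on generators in the obvious way, completing the proof.
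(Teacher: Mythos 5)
Your proposal is correct and takes the same route as the paper, whose proof is simply the one-line assertion that the claim ``follows by the definitions in terms of generators and relations''; you have filled in the details of that comparison of presentations. The one point worth spelling out --- that specializing $\td{t}_i\td{x}_{i+1}=z\td{x}_i\td{t}_i+u$ at $z=-1$ gives $t_ib_{i+1}+b_it_i=u$ rather than the relation $t_ib_i+b_{i+1}t_i=u$ of Definition~\ref{def:saHD}, and that these agree after conjugating by $t_i$ and using $t_i^2=1$ --- is handled correctly in your argument.
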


\begin{proof}
Follows by the definitions in terms of generators and relations of
all the algebras involved.
\end{proof}

\begin{theorem} \label{PBW:coveringAff}
Let $W = W_{A_{n-1}}, W_{D_n}$, or $W_{B_n}$. Then the elements
$\td{x}^{\al} \td{w}$, where $\al \in \Z_{+}^n \text{ \ and
}\td{w} \in \wtd{W}$, form a basis for $\caH_W$ (called a PBW
basis).
\end{theorem}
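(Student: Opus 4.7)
The plan is to first show that the proposed monomials span $\caH_W$, and then to establish linear independence by exploiting the two quotient maps from Proposition~\ref{quotient} simultaneously.

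For spanning, I would proceed by straightforward rewriting. Every element of $\caH_W$ is a linear combination of words in the generators $\td{x}_i$, $\td{t}_i$, and $z$. Since $z$ is central, all $z$-factors can be moved freely; moreover, the fact that $z\td{w} \in \wtd{W}$ whenever $\td{w} \in \wtd{W}$ allows such factors to be absorbed into the group element on the right. The relations $\td{t}_i \td{x}_j = z\td{x}_j \td{t}_i$ (for $j \ne i, i{+}1$), $\td{t}_i \td{x}_{i+1} = z\td{x}_i \td{t}_i + u$, and their type-$B$/$D$ analogues involving $\td{t}_n$, let one shift every $\td{x}$-factor to the left of every $\td{t}$-factor at the cost of generating terms of strictly smaller $\td{t}$-length. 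An induction on the total number of $\td{t}$-generators in a word, together with the relation $\td{x}_i\td{x}_j = z\td{x}_j\td{x}_i$ to order the $\td{x}$'s lexicographically, shows that every element reduces to a linear combination of elements of the form $\td{x}^\al \td{w}$.

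For linear independence, let $q_+: \caH_W \twoheadrightarrow \aH_W$ and $q_-: \caH_W \twoheadrightarrow \saH_W$ be the two quotient maps of Proposition~\ref{quotient}, and consider the algebra homomorphism $q = (q_+, q_-): \caH_W \to \aH_W \oplus \saH_W$. Fix a set of preimages $\{\td{w} : w\in W\}$, so that $\wtd{W} = \{\td{w}, z\td{w} : w \in W\}$. Under these maps, $q_+(\td{w}) = q_+(z\td{w}) = w$, while $q_-(\td{w}) = t_w$ and $q_-(z\td{w}) = -t_w$, where $\{t_w : w \in W\}$ is the corresponding basis of $\C W^-$. Similarly, $q_+(\td{x}^\al) = x^\al$ in $\aH_W$ and $q_-(\td{x}^\al) = b^\al$ in $\saH_W$, where the anticommutativity of the $b_i$'s matches the $z$-twisted commutation of the $\td{x}_i$'s under $z \mapsto -1$.

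Suppose a finite linear relation $\sum c_{\al,\td{w}}\, \td{x}^\al \td{w} = 0$ holds in $\caH_W$. Writing $c^{+}_{\al,w} := c_{\al, \td{w}}$ and $c^{-}_{\al,w} := c_{\al, z\td{w}}$, applying $q_+$ yields $\sum_{\al,w}(c^{+}_{\al,w} + c^{-}_{\al,w}) x^\al w = 0$ in $\aH_W$, and the classical PBW basis theorem for $\aH_W$ (Lusztig \cite{Lu1, Lu2}) forces $c^{+}_{\al,w} + c^{-}_{\al,w} = 0$ for every $(\al, w)$. Applying $q_-$ yields $\sum_{\al,w}(c^{+}_{\al,w} - c^{-}_{\al,w}) b^\al t_w = 0$ in $\saH_W$, and Theorem~\ref{PBW:DBspin} (with its type-$A$ counterpart from \cite{W1}) forces $c^{+}_{\al,w} - c^{-}_{\al,w} = 0$. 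Hence all coefficients vanish, completing the proof.

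The main obstacle I anticipate is bookkeeping in the spanning step: one must verify that every straightening relation is $z$-equivariant in exactly the way needed for the induction on $\td{t}$-length to terminate. Once spanning is in hand, the linear independence argument is essentially forced on us by the observation that the spanning set has exactly the right cardinality (namely $|\wtd{W}| \cdot |\Z_+^n|$) to match a basis of $\aH_W \oplus \saH_W$.
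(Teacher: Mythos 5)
Your proposal is correct and follows essentially the same route as the paper: the paper also disposes of spanning by a brief straightening remark and then proves linear independence by pushing a putative relation through the two quotients $\caH_W/\langle z-1\rangle \cong \aH_W$ and $\caH_W/\langle z+1\rangle \cong \saH_W$, obtaining $a_{\al,\td t}+b_{\al,\td t}=0$ and $a_{\al,\td t}-b_{\al,\td t}=0$ from Lusztig's PBW theorem and Theorem~\ref{PBW:DBspin}, respectively. Your write-up merely supplies more detail on the spanning step than the paper does.
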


\begin{proof}
By the defining relations, it is easy to see that the elements
$\td{x}^{\al} \td{w}$ form a spanning set for $\caH_W$. So it
remains to show that they are linearly independent.

For each element $ t \in W$, denote the two preimages in $\wtd{W}$
of $t $ by $\{\td{t}, z\td{t}\}$. Now suppose that
\[
 0 = \sum a_{\al,\td{t}}\td{x}^{\al}\td{t} + b_{\al,\td{t}}
 z\td{x}^{\al}\td{t}.
 \]
Let $I^+$ and $I^-$ be the ideals of $\caH_W$ generated by $z-1$
and $z+1$ respectively. Then by Proposition~\ref{quotient},
$\caH_W/I^+ \cong \aH_W$ and $\caH_W/I^- \cong \saH_W$. Consider
the projections:
\[
\Upsilon_+: \caH_W \longrightarrow \caH_W/I^+, \qquad
\Upsilon_-: \caH_W \longrightarrow \caH_W/I^-.
\]

By abuse of notation, denote the image of $\td{x}^\al$ in $\aH_W$
by $x^\al$. Observe that
\begin{align*}
0 = \Upsilon_+ \left (\sum (a_{\al,\td{t}}\td{x}^{\al}\td{t} +
b_{\al,\td{t}}\td{x}^{\al}z\td{t}) \right)
 = \sum  (a_{\al,\td{t}} + b_{\al,\td{t}}) x^{\al}t\in \aH_W.
\end{align*}
Since it is known \cite{Lu1} that $\{ x^{\al} t| \al \in \Z_{+}^n
\text{ \ and } t\in W\}$ form a basis for the usual degenerate
affine Hecke algebra $\aH_W$, $a_{\al,\td{t}} =-b_{\al,\td{t}}$ for
all $\al$ and $t$. Similarly, denoting the image in $\C W^-$ of
$\td{t}$ by $\bar{t}$, we have
\begin{align*}
0 = \Upsilon_- \left (\sum (a_{\al,\td{t}}\td{x}^{\al}\td{t} +
b_{\al,\td{t}}\td{x}^{\al}z\td{t}) \right)
= \sum ( a_{\al,\td{t}} - b_{\al,\td{t}}) x^{\al}\bar{t}\in
\saH_W.
\end{align*}
Since $\{ x^{\al} \bar{t}\}$ is a basis for the spin degenerate
affine Hecke algebra $\saH_W$, we have $a_{\al,\td{t}}
=b_{\al,\td{t}}$ for all $\al$ and $t$. Hence, $a_{\al,\td{t}} =
b_{\al,\td{t}} =0$, and the linear independence is proved.
\end{proof}

\end{document}